\newcounter{corr}
\definecolor{violet}{rgb}{0.580,0.,0.827}
\newcommand{\corr}[3]{\typeout{Warning : a correction remains in page
		\thepage}
	\stepcounter{corr}        
	{\color{blue}\ifmmode\text{\,\sout{\ensuremath{#1}}\,}\else\sout{#1}\fi}
	{\color{red}#2}
	{\color{violet} #3}}
\newcommand{\cm}[1]{{\color{blue}{#1}}}
\newtheorem{theorem}{Theorem}[section]
\newtheorem{proposition}[theorem]{Proposition}
\newtheorem{corollary}[theorem]{Corollary}
\newtheorem{lemma}[theorem]{Lemma}
\newtheorem{remark}[theorem]{Remark}
\newtheorem{definition}[theorem]{Definition}
\newcommand{\R}{\mathbb R}
\newcommand{\bS}{\mathbb{S}}
\def\I{\mathcal I}
\def\H{\mathcal H}
\def\eps{\varepsilon}
\def\lt{\left}
\def\rt{\right}
\def\S{\mathbb{S}}
\def\les{\lesssim}
\def\ges{\gtrsim}
\def\Ia{\I_\alpha}
\def\Iae{\I_{\alpha}^{\eta}}
\def\F{\mathcal F}
\def\wF{\widetilde{\F}}
\def\g{\mathbf{g}}
\def\M{\mathcal{M}}
\def\Msc{\M_{\rm sc}}
\def\spt{{\rm Spt}}
\def\LM#1{\hbox{\vrule width.2pt \vbox to#1pt{\vfill \hrule width#1pt
height.2pt}}}
\def\LL{{\mathchoice {\>\LM7\>}{\>\LM7\>}{\,\LM5\,}{\,\LM{3.35}\,}}}
\def\restr{{\LL}}
\def\bSr{\bS_r^2}
\author{Michael Goldman}
\address{CMAP, CNRS, \'Ecole polytechnique, Institut Polytechnique de Paris, 91120 Palaiseau,
	France}
\email{michael.goldman@cnrs.fr}
\author{Matteo Novaga}
\address{Department of Mathematics, University of Pisa, 56127 Pisa, Italy} 
\email{matteo.novaga@unipi.it} 
\author{Berardo Ruffini}
\address{Department of Mathematics, University of Bologna, 40126 Bologna, Italy} 
\email{berardo.ruffini@unibo.it}
\numberwithin{equation}{section}
\title[A liquid drop model with Willmore energy]{A charged liquid drop model with Willmore energy}
\begin{document}
	
	\begin{abstract}
		We consider a variational model of electrified liquid drops, involving competition between surface tension and charge repulsion. 
		Since the natural model happens to be ill-posed, we show that by adding to the perimeter a Willmore-type energy,  the problem turns back to be well-posed. We also prove that for small charge the droplets is spherical.
	\end{abstract}
	\subjclass[2020]{49J45, 49Q20,53B50}
\keywords{Willmore energy, charged liquid drops, isoperimetric problems}
	\maketitle
	\tableofcontents

	\section{Introduction}

	The model proposed in 1882 by Lord Rayleigh \cite{Ray} is widely used to describe the equilibrium shapes of charged liquid droplets in presence of an electrical charge, see e.g. \cite{miksis,roth,delamora,FonFri}. The underlining energy is given by
	\[
	 P(E)+\frac{Q^2}{{\rm Cap}_2(E)}.
	\] 
	Here  $E$ is a subset of $\R^3$, $P$ is a perimeter term modeling surface tension, $Q>0$ is the amount of charge and ${\rm Cap}_2$ is the standard capacity. The observed droplets should be (at least for small $Q$) stable volume-constrained critical points of this energy. Quite surprisingly we showed in \cite{gnrI} that this model is ill-posed in the sense that no local minimizers for the natural $L^1$ topology exist at any charge.  To reconcile this with the experimental and numerical observation of stable charged liquid drops, substential effort has been made to understand possible regularizing mechanisms. One possibility is to  restrict a priori the class of candidates as in \cite{gnrI,gnrII}. Another possibility is to add regularizing terms to the energy. See also \cite{muratov2023variational} where both aspects are present. One way of regularizing the energy  is to penalize strong concentration of charges (which amounts to regularize the capacity term) as in  \cite{MurNov,DHV,muratov2023variational} and to some extent \cite{MurNovRuf,MurNovRuf2,gnr4}. A second very natural way to regularize the energy is instead to take into account higher order effects in the term modelling  surface tension. It is for instance suggested in \cite{MichaelGoldman2017} that penalizing the curvature through the Willmore energy could be enough to restore well-posedness. The aim of this paper is to answer this question affirmatively.\\

\subsection{Notation and main results}
We will use the notation $A\lesssim B$ to indicate that there exists a constant $C>0$, typically 
depending on the dimension $d$ and on $\alpha\in (0,d)$ such that $A\le C B$ (we will specify when $C$ depends on other quantities).  We  write $A\sim B$ if $A\lesssim B\lesssim A$, and $A\ll B$ to indicate that there exists a (typically small) universal constant $\eps>0$ such that $A\le \eps B$.\\
 
	
	For $d=2,3$, $\alpha\in(0,d)$, $Q\ge 0$, $\lambda\ge0$  and $E$ a smooth set we consider the energy
	\[
	 \F_{\lambda,Q}(E)=\lambda P(E)+W(E) +Q^2 \I_\alpha(E).
	\]
When $\lambda=0$ we simply write $\F_Q(E)=\F_{0,Q}(E)=W(E) +Q^2 \I_\alpha(E)$. The various terms of the energy are:
\begin{itemize}
 \item the perimeter $P$ (see \cite{Maggi}), which satisfies for smooth sets
\begin{equation*}
	P(E) = \H^{d-1}(\partial E), 
\end{equation*}
where $\mathcal H^{d-1}$ denotes the $(d-1)$-dimensional Hausdorff measure in $\R^d$;
\item the elastic or Willmore energy $W$, defined as
\begin{equation*}	W(E) = 
	\begin{cases}
		\displaystyle\int_{\partial E} H^2\,d\H^1 \quad&\text{ for }d=2,
		\\
		\\
		\displaystyle\frac{1}{4}\int_{\partial E} H^2\,d\H^2\quad&\text{ for }d=3,
	\end{cases}
\end{equation*}
where  $H$ denotes the mean curvature of $\partial E$, i.e.~the curvature in dimension two and the sum of the principal curvatures in dimension three (see Section \ref{sec:prel});\\
\item the Riesz interaction energy $\I_\alpha$ defined, for $\alpha\in(0,d)$, as
\begin{equation}\label{defIaE}
	\I_\alpha(E) =\min_{\mu(E)=1} I_\alpha(\mu)=\frac{1}{{\rm Cap}_\alpha(E)}
\end{equation}
with
\begin{equation}\label{defIamu}
 I_\alpha(\mu)=\int_{\R^d\times \R^d} \frac{d\mu(x)d\mu(y)}{|x-y|^{d-\alpha}}.
\end{equation}
\end{itemize}
Given a volume constant $m>0$, we consider then the following problem
	\begin{equation}\label{eq:main}
		\min\left\{ \F_{\lambda,Q}(E)\,:\, |E|=m \right\}.
	\end{equation}
A first step in the study of \eqref{eq:main} was taken in \cite{GolNovRog}. There, the capacitary term was replaced by
\[
 V_\alpha(E)=\int_{E\times E}\frac{dxdy}{|x-y|^{d-\alpha}}.
\]
This corresponds to the assumption that the charge distribution $\mu$ is uniform on $E$. The functional is then a perturbation of the Gamow type models studied for instance in \cite{KM2,f2m3,frank2021existence}. The main results of \cite{GolNovRog} may be summarized as follows. First if we consider the case $d=2$, disks are the unique minimizers among simply connected sets for $Q$ small enough and $\lambda=0$ (and thus also for any $\lambda\ge 0$ by the isoperimetric inequality). Dropping the assumption that the sets are simply connected we lose existence for $\lambda=0$. For $\lambda>0$ there is now a dichotomy. There exists $\bar \lambda=\bar{\lambda}(m)>0$ such that for $Q=0$, minimizers are disks if $\lambda\ge \bar \lambda$ while they are annuli if $\lambda\le \bar \lambda$. This picture remains valid if $Q>0$ is small enough. When $d=3$ and $\lambda=0$ (and thus again for every $\lambda\ge 0$ by isoperimetric inequality), it was proven that for small enough $Q$ and $\alpha\in (1,3)$ the corresponding energy is uniquely minimized by balls. In this paper we prove that to a large extent the same results hold for \eqref{eq:main}.
	
In the two-dimensional case we strongly rely on the analysis from \cite{GolNovRog}. Let us define
	\begin{equation}\label{prob2dsc}
		 \min_{E\in \Msc(m)} \F_{\lambda,Q}(E),
		\end{equation}

	\begin{equation}\label{prob2d}
		 \min_{E\in \M(m)} \F_{\lambda,Q}(E),
		\end{equation}
		where $\M(m)$ is the family of measurable sets of measure $m$ in $\R^2$ and $\Msc(m)$ is the subset of $\M(m)$ of simply connected sets.
	\begin{proposition}\label{prop:Msc}
	 For every $\alpha\in(0,2)$, there exists $Q_0=Q_0(\alpha)$ such that for every $\lambda\ge 0$ and every $Q\le Q_0 m^{-(\alpha-1)/2}$, the only minimizers of \eqref{prob2dsc} are balls.
	\end{proposition}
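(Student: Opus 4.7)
The plan is to run a perturbative argument adapting the analysis of \cite{GolNovRog} for the analogous functional with $V_\alpha$ in place of $\I_\alpha$. First I would use the isoperimetric inequality $P(E)\ge P(B)$ (with $B$ the disk of area $m$) to get $\F_{\lambda,Q}(E)-\F_{\lambda,Q}(B)\ge \F_{0,Q}(E)-\F_{0,Q}(B)$, so that it suffices to treat the case $\lambda=0$; the perimeter term moreover enforces strictness unless $E=B$.

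The core inequality to prove is $W(E)+Q^2\I_\alpha(E)\ge W(B)+Q^2\I_\alpha(B)$ on $\Msc(m)$. Two structural facts set up the tug of war: among simply connected planar sets of given area the disk is the unique minimizer of $W$ (a consequence of Gage's inequality $P(E)W(E)\ge 4\pi^2$ together with the isoperimetric inequality, as recorded in \cite{GolNovRog}); while by the P\'olya--Szeg\H o inequality for Riesz capacity, $B$ is the unique \emph{maximizer} of $\I_\alpha$ at fixed area. Thus $W$ favors $B$ and $Q^2\I_\alpha$ penalizes it, and the task is to show that the Willmore part wins for $Q$ small.

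I would close the argument through a dichotomy around $B$. In a small $H^2$-tubular neighborhood of $\partial B$, parametrize $\partial E$ as a normal graph $\eta$ over $\partial B$, the area constraint giving $\int\eta = O(\|\eta\|^2)$. The second variation of $W$ at the disk under area-preserving perturbations is coercive, producing $W(E)-W(B)\ge c_1(m)\|\eta\|_{H^2}^2$. Symmetrically, the maximality of $B$ for $\I_\alpha$ gives $\I_\alpha(B)-\I_\alpha(E)\le c_2(m)\|\eta\|_{H^2}^2$ (the natural norm for the $\I_\alpha$ second variation embeds in $H^2$). Whenever $Q^2 c_2\le c_1$ these combine into the required strict inequality, forcing $\eta\equiv 0$. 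Outside this neighborhood, a compactness and qualitative-stability step yields a uniform gap $W(E)-W(B)\ge \eta_0(m)>0$, which combined with the trivial bound $\I_\alpha(B)-\I_\alpha(E)\le \I_\alpha(B)$ closes the argument as soon as $Q^2\le \eta_0/\I_\alpha(B)$. Tracking the $m$-dependence via the rescaling $E=\sqrt m\,E_0$, which gives $m^{1/2}\F_{0,Q}(E)=W(E_0)+Q^2 m^{(\alpha-1)/2}\I_\alpha(E_0)$ on $|E_0|=1$, identifies $Q^2 m^{(\alpha-1)/2}$ as the correct smallness parameter and yields a threshold of the form $Q\lesssim m^{-(\alpha-1)/2}$.

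The main obstacle is the pair of matching quadratic estimates at $B$. The Willmore stability is essentially the one from \cite{GolNovRog}, but the $\I_\alpha$ side is more delicate than the $V_\alpha$ case there, because $\I_\alpha$ is defined through an infimum over measures rather than an explicit double integral: one needs to control how the Riesz equilibrium measure of $E$ deforms under small $H^2$ perturbations of $\partial B$, and to verify that the resulting second-variation estimate is quadratic in a norm dominated by $\|\eta\|_{H^2}$, with a constant $c_2(m)$ whose ratio with $c_1(m)$ matches the stated scaling.
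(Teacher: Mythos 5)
Your proposal is in the right spirit and would, suitably fleshed out, give the result, but the paper proceeds by a cleaner, non-dichotomous route that you should be aware of. After reducing to $\lambda=0$ and $m=|B_1|$ by isoperimetric inequality and scaling (as you do), the paper invokes the \emph{global} quantitative stability inequality for the Willmore functional from \cite[Theorem~2.3]{GolNovRog}, valid for every simply connected competitor: $W(E)-W(B_1)\gtrsim (P(E)-P(B_1)) + \min_x|E\Delta B_1(x)|$. Since any competitor with $\F_{0,Q}(E)\le\F_{0,Q}(B_1)$ has $W(E)-W(B_1)=O(Q^2)$, this controls both the perimeter and the Fraenkel asymmetry of $E$. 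Next comes the observation you partly anticipate but do not exploit: a bound on $W(E)$ is exactly a bound on $\int_0^{P(E)}|\gamma''|^2$ for the arclength parametrization $\gamma$, and with $P(E)$ bounded this gives $C^{1,1/2}$ control; combined with the smallness of the asymmetry, it forces $E$ to be $C^{1,\beta}$-close to $B_1$ for every $Q$ small, uniformly. Your ``far from $B$'' branch is therefore vacuous for small $Q$, and the delicate part of your dichotomy --- proving that small energy excess places $E$ in the normal-graph neighborhood, which is precisely what a compactness argument would need anyway --- is handled in one stroke. Once $E$ is nearly spherical, the paper closes with the Fuglede-type estimate for $\I_\alpha$ from \cite[Proposition~4.5]{gnr4}, stated directly as $\I_\alpha(B_1)-\I_\alpha(E)\lesssim P(E)-P(B_1)$; this is the form that matches the $W$-stability and avoids having to re-derive a second-variation bound for the equilibrium measure in an $H^2$-type norm, which you correctly flag as the delicate point of your approach. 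In short: your two quadratic estimates and compactness can work, but the reference inequalities the paper uses do the bookkeeping in terms of perimeter deficit rather than $\|\eta\|_{H^2}^2$, and the $W^{2,2}$ arclength control makes the dichotomy unnecessary. (A small arithmetical note: from your own scaling identity $W(E_0)+Q^2m^{(\alpha-1)/2}\I_\alpha(E_0)$ the effective small parameter is $Q^2m^{(\alpha-1)/2}$, giving a threshold $Q\lesssim m^{-(\alpha-1)/4}$, not $m^{-(\alpha-1)/2}$; make sure the exponent you report is the one your scaling actually yields.)
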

	For \eqref{prob2d} we are only able here to treat the case $\lambda>\bar{\lambda}$ when we expect minimizers to be disks when they exist.  We set here $\bar \lambda=\bar \lambda(|B_1|)$ to be the constant  given by \cite[Theorem 2.7]{GolNovRog}.
\begin{proposition}\label{prop:M}
  For every $\alpha\in (0,2)$ and $\lambda_0>\bar \lambda$, there exists $Q_0=Q_0(\lambda_0,\alpha)>0$ such that for every $\lambda\ge \lambda_0 m^{-1}$ and every $Q\le Q_0 m^{-(\alpha-1)/2}$, the only minimizers of \eqref{prob2d} are balls.
\end{proposition}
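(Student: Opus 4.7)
My strategy is a perturbation argument around the $Q=0$ case handled by \cite[Theorem 2.7]{GolNovRog}. First I reduce to the normalized volume $m=|B_1|$ via the scaling $E\mapsto (|B_1|/m)^{1/2}E$: under this rescaling the hypotheses turn into $\tilde\lambda\ge\lambda_0>\bar\lambda$ and $\tilde Q\le \tilde Q_0$ for some $\tilde Q_0=\tilde Q_0(Q_0,\alpha)$, so it suffices to show that every minimizer $E^*\in\M(|B_1|)$ of $\F_{\lambda,Q}$ coincides with $B_1$ up to translation, for $\tilde Q_0$ small enough.

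Using $B_1$ as competitor together with the fact that balls maximize the Riesz energy $\I_\alpha$ at fixed volume (isocapacitary inequality), one obtains
\[
\F_{\lambda,0}(E^*)-\F_{\lambda,0}(B_1)\;\le\;Q^2\bigl(\I_\alpha(B_1)-\I_\alpha(E^*)\bigr)\;\le\;Q^2\,\I_\alpha(B_1),
\]
yielding uniform-in-$Q$ bounds on $P(E^*)$ and $W(E^*)$. Standard Willmore compactness and lower semicontinuity of $\F_{\lambda,0}$ then produce, up to translations and subsequences, Hausdorff convergence $E^*\to E_0$ to some minimizer $E_0$ of $\F_{\lambda,0}$; since $\lambda>\bar\lambda$, \cite[Theorem 2.7]{GolNovRog} identifies $E_0=B_1$ (this is where the assumption $\lambda>\bar\lambda$ is used, in order to exclude annular competitors that would arise for smaller $\lambda$).

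To promote ``$E^*$ close to $B_1$'' into ``$E^*=B_1$'', I bootstrap the Euler--Lagrange equation for $\F_{\lambda,Q}$ --- a Willmore-type equation perturbed by the mean curvature term $\lambda H$ and by the H\"older continuous capacitary potential --- to turn Hausdorff convergence into $C^{2,\gamma}$ convergence. At $B_1$, the Hessian of $\lambda P+W$ on normal perturbations modulo translations is strictly coercive whenever $\lambda>\bar\lambda$ (a Fuglede-type computation on $\partial B_1$ via spherical-harmonic decomposition), and the contribution of $Q^2\I_\alpha$ to the second variation is at most $O(Q^2)$; hence for $\tilde Q_0$ sufficiently small the second variation of $\F_{\lambda,Q}$ at $B_1$ remains strictly positive modulo translations, making $B_1$ a strict $C^{2,\gamma}$-local minimum of $\F_{\lambda,Q}$. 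Combined with the smooth convergence $E^*\to B_1$ this forces $E^*=B_1$.

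I expect the main obstacle to be the $\varepsilon$-regularity / selection-principle step that upgrades Hausdorff to $C^{2,\gamma}$ convergence for the Willmore-regularized minimizer in the presence of the nonlocal capacitary term; once this smooth convergence and the coercivity of the second variation at the ball are established, the final rigidity is routine.
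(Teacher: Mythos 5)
Your plan is a legitimate strategy in principle but is genuinely different from, and substantially longer than, the paper's. The paper's proof is a two-line reduction: after scaling to $m=|B_1|$, it observes that $\F_{\lambda,Q}(E)\le\F_{\lambda,Q}(B_1)$ implies $\F_{\lambda,0}(E)\le\F_{\lambda,0}(B_1)+Q^2(\I_\alpha(B_1)-\I_\alpha(E))$, invokes \cite[Lemma 2.9]{GolNovRog} (which says that sets with small $\F_{\lambda,0}$-excess and $\lambda>\bar\lambda$ must be simply connected) to land in $\Msc(m)$, and then applies the already-proved Proposition~\ref{prop:Msc}. You make no use of Proposition~\ref{prop:Msc} at all, and thus rebuild a rigidity argument from scratch via compactness, Euler--Lagrange bootstrap, and a second-variation/selection-principle step. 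What the paper's route buys is brevity and the avoidance of any regularity theory beyond what is needed in the simply connected case; what your route buys, if completed, is independence from the simply connected reduction.

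Two substantive concerns. First, you misattribute the role of $\lambda>\bar\lambda$ in the second-variation step. The Hessian of $\lambda P+W$ at $B_1$, restricted to normal perturbations modulo translations, is strictly coercive for \emph{every} $\lambda\ge 0$ (both $P$ and $W$ individually have positive second variation at the disk); the threshold $\bar\lambda$ is a global competition between disks and annuli, not a stability threshold at $B_1$. You correctly identify the actual role of $\lambda>\bar\lambda$ in the earlier step (to deduce $E_0=B_1$ from \cite[Theorem 2.7]{GolNovRog}), so only the sentence about the Hessian needs to be corrected. Second, the step that you yourself flag --- upgrading Hausdorff convergence to the $C^{2,\gamma}$ convergence needed to compare with a $C^{2,\gamma}$-local minimum --- is where the argument currently has a real gap. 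In $d=2$, finite Willmore energy gives a $W^{2,2}\hookrightarrow C^{1,1/2}$ parametrization for free, so Hausdorff convergence does upgrade to $C^{1,\beta}$ convergence essentially automatically; but passing to $C^{2,\gamma}$ requires Schauder theory on the fourth-order Euler--Lagrange equation and H\"older estimates on the capacitary potential for $\alpha\in(0,2)$, none of which is sketched. Until that step is filled in, the conclusion does not follow; by contrast, the paper's route requires only the $C^{1,\beta}$ closeness already used in Proposition~\ref{prop:Msc} together with \cite[Proposition 4.5]{gnr4}, and never needs second-order regularity of the minimizer.
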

\begin{remark}
 When $\lambda\le \bar \lambda$ we expect to see instead annuli. However this seems to be a difficult problem as the analysis in \cite{GolNovRog} heavily relies on the fact that $V_\alpha(E)$ is an explicit function of $E$.
\end{remark}
	We now turn to the case $d=3$. Our  main result is the following.
	\begin{theorem}\label{thm:main}
		For every $\alpha\in[2,3)$, there exists a constant $\overline Q=\overline{Q}(\alpha)>0$ such that for every $\lambda\ge 0$ and $m>0$ with $Q\le \overline{Q}  m^{(3-\alpha)/6}$, the only minimizers of \eqref{eq:main} are the balls of measure $m$.
	\end{theorem}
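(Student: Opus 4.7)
The plan is to reduce to $m=1$ and $\lambda=0$ by scaling and monotonicity, and then to argue stability near the unit ball $B_1$ via a quantitative Willmore inequality combined with a second-variation argument. Concretely, under the dilation $E\mapsto tE$ the three pieces transform as $W(tE)=W(E)$, $P(tE)=t^{2}P(E)$ and $\I_\alpha(tE)=t^{-(3-\alpha)}\I_\alpha(E)$, so that $\F_{\lambda,Q}(E)=\F_{\lambda t^{-2},\,Qt^{(3-\alpha)/2}}(tE)$. Choosing $t=m^{-1/3}$ normalizes $|E|$ to $1$ and turns the hypothesis $Q\le \overline{Q}\,m^{(3-\alpha)/6}$ into the dimensionless bound $Q\le \overline{Q}$. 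Moreover, since balls minimize perimeter at fixed volume, if $B_1$ is the unique minimizer of $\F_{0,Q}$ then it is also the unique minimizer of $\F_{\lambda,Q}$ for every $\lambda\ge 0$. Hence it suffices to treat $\lambda=0$ and $m=1$.

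For a competitor $E$ of unit volume, the P\'olya--Szeg\H{o}/isocapacitary inequality gives $\I_\alpha(E)\le \I_\alpha(B_1)$, since $B_1$ maximizes $\I_\alpha$ at fixed volume. Combined with the ball comparison $\F_{0,Q}(E)\le \F_{0,Q}(B_1)$ and the Willmore inequality $W(E)\ge 4\pi=W(B_1)$, this yields
\[
0\le W(E)-4\pi \le Q^2\bigl(\I_\alpha(B_1)-\I_\alpha(E)\bigr) \lesssim \overline{Q}^{\,2}.
\]
Thus the Willmore deficit of any minimizer is of order $\overline{Q}^{\,2}$ and can be made arbitrarily small by taking $\overline{Q}$ small.

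At this point I would invoke a quantitative Willmore stability result of De Lellis--M\"uller / Schygulla type: a closed surface of genus zero with Willmore energy close to $4\pi$ is, after a translation, a small normal graph $\partial E=\{(1+\varphi(x))x:\,x\in\S^2\}$ over $\S^2$, with quantitative control on $\|\varphi\|_{W^{2,2}}$ (or $\|\varphi\|_{C^{1,\beta}}$) in terms of the deficit. The volume constraint $\int_{\S^2}(1+\varphi)^3=|B_1|$ fixes the scalar mode of $\varphi$, and centering the drop eliminates the three translation modes. Expanding the energy to second order, the Willmore Hessian at $\S^2$ is coercive,
\[
\partial^2 W(B_1)[\varphi,\varphi]\,\gtrsim\,\|\varphi\|_{H^2(\S^2)}^2,
\]
once these finite-dimensional degeneracies are removed, whereas the Riesz contribution only enjoys the milder estimate $\I_\alpha(B_1)-\I_\alpha(E)\lesssim \|\varphi\|_{H^s(\S^2)}^2$ for some $s<2$, reflecting the lower differential order of $\I_\alpha$ compared with $W$. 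Putting these together gives
\[
\F_{0,Q}(E)-\F_{0,Q}(B_1)\,\ge\,\tfrac12\bigl(c-CQ^2\bigr)\|\varphi\|_{H^2(\S^2)}^2 + o\bigl(\|\varphi\|_{H^2(\S^2)}^2\bigr),
\]
which is strictly positive for $\varphi\ne 0$ once $\overline{Q}$ is small enough, forcing $E=B_1$.

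The hard part is twofold: (i) the quantitative Willmore stability used in the parametrization step must carefully take into account the conformal/M\"obius kernel of the Willmore Hessian at $\S^2$, which is neutralized only after the volume and centering normalizations; and (ii) the sub-$H^2$ upper bound on the Riesz perturbation $\I_\alpha(B_1)-\I_\alpha(E)$ requires potential-theoretic estimates for the equilibrium measure under smooth normal perturbations of $\S^2$. The restriction $\alpha\ge 2$ is expected to enter precisely in (ii), where the decreased singularity of the Riesz kernel produces the regularity gain needed to beat the Willmore quadratic form.
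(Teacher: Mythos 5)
Your scaling reduction, the $O(Q^2)$ Willmore deficit bound, and the intuition that $\I_\alpha$ is ``lower order'' than $W$ are all correct and parallel the paper. But there are two genuine gaps. The fatal one is the parenthetical ``(or $\|\varphi\|_{C^{1,\beta}}$)'' in the parametrization step: the De Lellis--M\"uller/Schygulla quantitative Willmore stability gives a conformal parametrization of $\partial E$ by $\mathbb{S}^2$ that is only $W^{2,2}\cap\mathrm{Lip}$, and $W^{2,2}$ on a $2$-manifold does \emph{not} embed in $C^{1,\beta}$ (it is a derivative short), nor does the Lipschitz bound give you more than $C^{0,1}$. The Fuglede-type estimates you invoke for the Riesz term (from \cite{gnrI,Prunier,gnr4}) require the competitor to be a nearly spherical set in the $C^1$ sense, i.e.\ a normal graph with small $C^1$ norm over $\mathbb{S}^2$. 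Bridging the gap between the $W^{2,2}$ parametrization and uniform-in-$Q$ $C^{1,\beta}$ regularity is precisely the hard core of the paper: it requires a Simon-type $\varepsilon$-regularity theorem for the quasi-minimizers (Theorem \ref{thm:regularity}), combined with a decay lemma and a compactness argument. The paper even explicitly flags that the R\"oger--Sch\"atzle stability inequality ``does not seem to be enough to perform the Fuglede type computations,'' which is exactly the point your proposal glosses over. Your second-variation framing would also require controlling the $o(\|\varphi\|_{H^2}^2)$ remainder of $W$ \emph{and} the quadratic expansion of $\I_\alpha$, both of which in turn demand the same $C^{1}$-closeness.

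The second gap is existence. You write ``the Willmore deficit of any minimizer'' and then compare $\F_{0,Q}(E)\le\F_{0,Q}(B_1)$, but the existence of a minimizer is not at all obvious in the class of sets with $W^{2,2}$ boundary, and the paper devotes a substantial portion of Section 3 to it: it introduces the regularized Riesz energy $\I_\alpha^\eta$ (to restore continuity under Hausdorff convergence, Lemma \ref{lem:contintuityIeta}), proves a Lagrange-multiplier relaxation of the volume constraint (Proposition \ref{prop:Lambda}), shows existence of ``approximable minimizers'' of $\widetilde{\F}_Q^{\eta,\Lambda}$ via a smooth-approximation lemma for weak immersions (Lemma \ref{lem:approx}), and finally removes $\eta$. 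None of this is automatic. Once both issues are resolved, the paper's closing step is also slightly cleaner than yours: rather than a Taylor expansion, it uses the two linear-in-deficit bounds $P(E)-P(B_1)\lesssim W(E)-W(B_1)$ (R\"oger--Sch\"atzle) and $\I_\alpha(B_1)-\I_\alpha(E)\lesssim P(E)-P(B_1)$ (Fuglede) to get $(1-CQ^2)(P(E)-P(B_1))\le 0$ directly, avoiding any remainder estimates.
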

	 As above we prove the result for $\lambda=0$. Moreover, by scaling we may assume that $m=|B_1|$. The main difference between $d=2$ and $d=3$ is that in the former, having finite energy implies a $C^{1,1/2}$ control on the sets implying in particular that sets of small energy must be nearly spherical. In the latter, we can still obtain from \cite{DeLMu2005} that they are $W^{2,2}\cap {\rm Lip}$ parameterizations of the sphere. While a stability inequality for $W$ is given by \cite{roesc12}, as opposed to \cite{GolNovRog}, this does not seem to be  enough to perform the Fuglede type computations from \cite{gnrI,Prunier,gnr4} for $\Ia$. Our strategy is thus to first prove (uniform in $Q$) $C^{1,\beta}$ regularity for minimizers. A first issue is the existence of minimizers. If we work with the non-parametric approach of Simon, see \cite{Simon93}, we naturally end up in the class of varifolds with square integrable mean curvature. As in \cite{gnrI} a major problem is then the semi-continuity of $\Ia$. We solve this issue by relying on \cite{DeLMu2005,DeLMu2006} to obtain that in the small charge regime, sets of small energy must be parameterizations of the sphere with controlled $W^{2,2}$ norm. This yields convergence in the Hausdorff sense from which we obtain existence of minimizers. In order to prove regularity of these minimizers, a natural idea would be to follow the parametric approach \cite{riviere2014variational} of Rivi\`ere. However, this does not seem to be easily compatible with our capacitary term. We go back instead to the non-parametric approach viewing the volume penalization and $\Ia$ as lower order perturbations of $W$. Our proof departs in a substantial point from \cite{Simon93} (see also \cite{pozzetta2017confined}). Indeed, a central difficulty in the theory of varifolds is that it is in general not known that smooth sets are dense in energy. Since the construction of competitors in \cite{Simon93} has to be made for smooth sets, \cite{Simon93} does it at the level of minimizing sequences. The main drawback is that one has to distinguish the 'good points' where there is no energy concentration from the 'bad points' where there is energy concentration. In our case we leverage on the fact that sets with small energy are parametrized to prove density in energy of smooth sets (see Lemma \ref{lem:approx}). 
Using this approximation result we can avoid altogether the presence of the 'bad points'. Not only this simplifies significantly the proof from \cite{Simon93},
but it also allows to obtain bounds which do not depend on $Q$. Indeed, the analysis of the bad points is based on a compactness argument which does not seem to yield uniform regularity estimates. We believe that one of the original aspects of this work is the combined use of the parametric and non-parametric approaches together.

 The restriction $\alpha\in [2,3)$ in Theorem \ref{thm:main} comes from the fact that we use the Fuglede type estimates from \cite{gnrI,Prunier,gnr4} which are not known for $\alpha<2$. Indeed, in this case the underlying operator is a fractional Laplacian of order larger than one. Not much seems to be known for this type of harmonic measures.

\begin{remark}
 Our proof of existence and  $C^{1,\beta}$ regularity for minimizers of \eqref{eq:main} would also apply (and would actually be easier) for the functional from \cite{GolNovRog} (replacing $\Ia$ by $V_\alpha$). In particular, this proves that for small $Q$, the corresponding minimizers are nearly-spherical. Using the Fuglede computations from \cite{KM2,f2m3} we can extend \cite[Theorem 4.6]{GolNovRog} to the whole range $\alpha\in(0,3)$.
\end{remark}

\smallskip

\noindent{\bf Acknowledgments.}
	The authors thank M. Pozzetta for useful discussions about Lemma \ref{lem:approx}. We thank F. Rupp for pointing out \cite{Rupp24} to us.
	M. Goldman and B. Ruffini  were partially supported by the project ANR-18-CE40-0013 SHAPO, financed by the French Agence Nationale de la Recherche (ANR), and M. Novaga was partially supported by the project PRIN 2022 GEPSO, financed by the European Union -- NextGenerationEU.
	B. Ruffini and M. Novaga are members of the INDAM-GNAMPA. M.Goldman  and M. Novaga acknowledge the project G24-202 ``Variational methods for geometric and optimal matching problems'' funded by Università Italo Francese.

	\section{The two-dimensional case}
We first prove Proposition \ref{prop:Msc}.
	\begin{proof}[Proof of Proposition \ref{prop:Msc}]
	 By the isoperimetric inequality it is enough to prove the statement for $\lambda=0$. By scaling we may further assume that $m=|B_1|$. If $E\in \Msc(|B_1|)$ is such that $\F_{0,Q}(E)\le \F_{0,Q}(B_1)$, then
	 \[
	  W(E)\le W(B_1) +Q^2(\I_\alpha(B_1)-\I_\alpha(E)).
	 \]
In particular, if $Q$ is small enough, we can apply \cite[Theorem 2.3]{GolNovRog} to obtain
\begin{equation}\label{quantWI}
 Q^2(\I_\alpha(B_1)-\I_\alpha(E))\ge W(E)-W(B_1)\ges (P(E)-P(B_1)) +\min_{x\in \R^2} |E\Delta B_1(x)|.
\end{equation}
Since $\partial E=\gamma ([0,P(E)])$ where $\gamma$ is a unit-speed parametrization with
\[
 W(E)=\int_0^{P(E)} |\gamma''|^2,
\]
from the embedding of $W^{2,2}([0,P(E)])$ in $C^{1,1/2}([0,P(E)])$ we see that for every $\beta<1/2$ and every $Q$ small enough, up to translation, every such set is $C^{1,\beta}$ close to $B_1$ and thus a nearly spherical set. By \cite[Proposition 4.5]{gnr4}  we then get
\[
 \I_\alpha(B_1)-\I_\alpha(E)\les P(E)-P(B_1).
\]
Combining this and \eqref{quantWI} yields a contradiction if $Q$ is small enough.
	\end{proof}
We now prove Proposition \ref{prop:M}.
\begin{proof}[Proof of Proposition \ref{prop:M}]
 By the isoperimetric inequality it is enough to prove that for every $\lambda>\bar \lambda$ there exists $Q_0>0$ such that balls are the unique volume-constrained minimizers of $\F_{\lambda,Q}$ for every $Q\le Q_0$. For this we notice as before that if $\F_{\lambda,Q}(E)\le \F_{\lambda,Q}(B_1)$, then
 \[
  \F_{\lambda,0}(E)\le \F_{\lambda,0}(B_1) +Q^2 (\I_\alpha(B_1)-\I_\alpha(E))
 \]
we may thus apply \cite[Lemma 2.9]{GolNovRog} and conclude that for $Q$ small enough (depending on $\lambda$), such sets must be simply connected. By Proposition \ref{prop:Msc}, this concludes the proof.
\end{proof}

	\section{The three-dimensional case}
	\subsection{Preliminaries on Riemannian geometry and weak immersions}\label{sec:prel}
	
In this section we first collect some notation and well-known facts from Riemannian geometry, see e.g. \cite{docarmosurf,do1992riemannian}. We then prove in Lemma \ref{lem:approx} an approximation result for weak immersions. For $r>0$ we write $\bSr=\partial B_r$ and simply $\S^2$ for $\S^2_1$. \\
We denote by $D$ the flat connection on $\R^3$ i.e. if $X=\sum_{i=1}^3 X_i e_i$ then $D_X Y=\sum_{i=1}^3 X_i \partial_i Y$. Let $\Sigma\subset \R^3$ be a smooth compact $2-$dimensional manifold oriented by its normal $\nu$. For $X,Y$ tangent vectorfields to $\Sigma$ we define the action of the second fundamental form $A$ on them as
\begin{equation}\label{defA}
 A(x)[X(x),Y(x)]=- (D_X Y)\cdot \nu.
\end{equation}
Notice that witht this convention, if $\Sigma$ is the boundary of a convex set oriented with the outward normal then $A$ is positive. If $(\tau_1,\tau_2)$ is a local orthogonal frame we have for $i=1,2$,
\begin{equation}\label{Dtaui}
 D_{\tau_i} \tau_j=- A[\tau_i,\tau_j] \nu.
\end{equation}
Moreover, if $\Sigma=\S^2$, we have $A[\tau_i,\tau_j]=\delta_{ij}$. When considering $A$ as a (symmetric) linear map from $T_x\Sigma$ to $T_x\Sigma$ we write
\[
 H={\rm Tr}(A) \qquad \textrm{and } \qquad A^{\circ}=A -\frac{1}{2}({\rm Tr } A)\,  {\rm Id}
\]
to be the mean curvature and the  traceless part of $A$. If $(\kappa_1,\kappa_2)$ are the eigenvalues of $A$, we have
\[
 |H|^2= (\kappa_1+\kappa_2)^2, \qquad |A|^2=\kappa_1^2+\kappa_2^2 \qquad \textrm{and } \qquad |A^\circ|^2=\frac{1}{2} (\kappa_1-\kappa_2)^2.
\]
If we denote by $\g(\Sigma)$ the genus of $\Sigma$, we have by the Gauss-Bonnet formula,
\begin{multline}\label{GaussBonnet}
 \frac{1}{4}\int_{\Sigma} |H|^2 d\H^2=\frac{1}{4}\int_{\Sigma} |A|^2d\H^2 + 2\pi (1-\g(\Sigma)) \qquad \textrm{and } \\ \int_{\Sigma} |A^\circ|^2d\H^2 =\frac{1}{2}\int_{\Sigma} |A|^2 d\H^2-4\pi (1-\g(\Sigma)).
\end{multline}
In particular we have as consequence of the Li-Yau inequality (and $\g(\Sigma)\ge 0$) that for any smooth  compact surface $\Sigma$,
\begin{equation}\label{LiYauA}
 \frac{1}{4}\int_{\Sigma} |A|^2\ge 2\pi (1+\g(\Sigma))\ge 2\pi.
\end{equation}
 For $\psi$ a smooth real-valued function defined on a neighborhood of $\Sigma$ we define $\nabla \psi (x)\in T_x\Sigma$ as the line vector
\[
 \nabla \psi= \pi_{T_x\Sigma} (D\psi)
\]
where $\pi_{T_x\Sigma}$ is the projection on $T_x\Sigma$. If $(\tau_1,\tau_2)$ is a local orthonormal frame, we often write $\partial_i \psi= \nabla\psi\cdot \tau_i$. We then define
\begin{equation}\label{defnabla2}
 \nabla^2 \psi(x)[X(x),Y(x)]=D^2 \psi(x)[X(x),Y(x)]-A[X,Y](x)D\psi(x)\cdot \nu(x).
\end{equation}
We similarly denote $\partial_{ij} \psi= \nabla^2 \psi[\tau_i,\tau_j]$. If $\psi=(\psi_1,\cdots \psi_k)\in \R^k$ then we still write $\nabla \psi$ for the matrix whose $k-th$ line is given by $\nabla \psi_k$. In local coordinates we have $\nabla \psi=(\partial_1 \psi, \partial_2 \psi)$. For a $(k\times 3)$-array $M=(M_1,\cdots,M_k)^T$ we write $\pi_{T_x \Sigma}(M)$ for the $(k\times 3)$-array $(\pi_{T_x \Sigma}(M_1),\cdots \pi_{T_x\Sigma}(M_k))^T$. Let us point out that if $\Sigma$ is connected and $\psi$ is a smooth embedding then by the theorem of Jordan-Brouwer, see \cite[Proposition 12.2]{Benedet}, there exists $E$ such that $\partial E=\psi(\Sigma)$.

	 For  $\psi\in W^{2,2}(\bSr)\cap {\rm Lip}(\bSr)$ such that for some $c_0>0$ and $\H^2$ a.e. on $\bSr$,
	\[
	 |\partial_1 \psi \wedge \partial_2 \psi|\ge c_0
	\]
	 we set
	\[
	 N_\psi=\frac{\partial_1 \psi \wedge \partial_2 \psi}{|\partial_1 \psi \wedge \partial_2 \psi|}.
	\]
	We consider $N_\psi$ as a function from $\bSr$ to $\bS^2$.
	Notice that such $\psi$ are weak immersions in the sense of \cite{riviere2014variational}.
For $\Sigma=\psi(\bSr)$,  we see that $N_\psi$ is a normal vectorfield to $\Sigma$. Let then $g$ be the pullback metric on $\bSr$ of the standard Euclidean metric on $\Sigma$ through $\psi$ i.e.
\[
 g(v,w)= \nabla \psi(v)\cdot \nabla \psi(w) \qquad \forall (v,w)\in T\bSr\times T\bSr.
\]
We then write $d{\rm vol}_g$ for the volume form induced by the metric $g$, that is
\[
 d{\rm vol}_g=\sqrt{|\partial_1 \psi|^2|\partial_2 \psi|^2-(\partial_1\psi\cdot \partial_2 \psi)^2} d\H^{2}=|\partial_1 \psi\wedge \partial_2\psi|d\H^{2}.
\]
We then have (see \cite[Section 3.3]{docarmosurf})
\[
 \int_{\Sigma} |A|^2 d\H^2=\int_{\bSr} | A G^{-1}|^2 \, d {\rm vol}_g
\]
where by \eqref{defA},
\[
 A_{ij}= -(\partial_{ij} \psi)\cdot N_\psi =-(\nabla^2 \psi \cdot N_\psi)_{ij} \]
 and
 \[ G^{-1}=\frac{1}{|\partial_1 \psi\wedge \partial_2\psi|^2}\begin{pmatrix} |\partial_2 \psi|^2 & -\partial_1\psi\cdot \partial_2\psi\\
-\partial_1\psi\cdot \partial_2\psi & |\partial_1 \psi|^2
											\end{pmatrix}.
\]
Notice that by the Weingarten equations, see \cite[Section 3.3]{docarmosurf},
\[
 | A G^{-1}|^2=|\nabla N_\psi|^2_g.
\]
Therefore,
\begin{multline}\label{eq:willmoreinpsi}
 \int_{\Sigma} |A|^2 d\H^2 =\int_{\bSr} |\nabla N_\psi|^2_g \, d {\rm vol}_g\\
 = \int_{\bSr}|\partial_1 \psi\wedge \partial_2\psi|^{-3}\lt|(\nabla^2 \psi \cdot N_\psi)\begin{pmatrix} |\partial_2 \psi|^2 & -\partial_1\psi\cdot \partial_2\psi\\
-\partial_1\psi\cdot \partial_2\psi & |\partial_1 \psi|^2
\end{pmatrix}\rt|^2 d\H^{2}.
\end{multline}
When $\psi\in W^{2,2}(\bSr)$ is conformal, we let $h^2= |\nabla \psi|^2/2=|\partial_1 \psi \wedge \partial_2 \psi|$ be the conformal factor. Notice  that for $\psi$ conformal  if $h^2=|\nabla \psi|^2/2$ is bounded then $\psi\in {\rm Lip}(\bSr)$. For such conformal maps we have
\[
 \int_{\Sigma} |A|^2 d\H^2=\int_{\bSr} |\nabla N_\psi|^2 \, d \H^{2}.
\]

We will need an approximation result. As suggested in \cite[page 316]{KuwertLi2012} we follow the strategy of \cite[Section 4]{SchUhl}. 

 \begin{lemma}\label{lem:approx0}
 	Assume $\psi\in W^{2,2}(\S^2,\R^3)\cap {\rm Lip}(\S^2,\R^3)$ is conformal and  such that
 	\[
 	\|h-1\|_{L^\infty(\S^2)}\le\frac{1}{4}.
 	\]

 	 Then there exists a sequence of smooth maps $\psi_n$ such that $\psi_n$ converges strongly in $W^{2,2}(\S^2,\R^3)$ to $\psi$ with
 	\begin{equation}\label{quasiconfpsin}
 	|\partial_1\psi_n\wedge\partial_2\psi_n|\ge \frac18 \qquad \textrm{and } \qquad \|\nabla\psi_n\|_{L^\infty(\S^2)}\le 2.
 	\end{equation}
  \end{lemma}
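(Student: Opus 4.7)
The strategy, following \cite[Section 4]{SchUhl}, is to construct $\psi_n$ by convolving $\psi$ with smooth mollifiers in local coordinates on $\S^2$ and then to verify the pointwise bounds using conformality. Concretely, I would fix a finite atlas of conformal (stereographic) charts on $\S^2$ with a subordinate smooth partition of unity; in each chart push $\psi$ forward to a $W^{2,2}\cap{\rm Lip}$ map from a planar domain into $\R^3$, convolve coordinatewise with a standard smooth kernel $\rho_{1/n}$, and reassemble via the partition of unity. Standard mollification theory then gives $\psi_n\to\psi$ strongly in $W^{2,2}(\S^2,\R^3)$, the Lipschitz regularity of $\psi$ being used to absorb the errors coming from differentiating the partition of unity.

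The upper bound $\|\nabla\psi_n\|_{L^\infty}\le 2$ follows at once from conformality: we have $|\nabla\psi|^2=2h^2\le 2(5/4)^2=25/8$, hence $\|\nabla\psi\|_{L^\infty}\le 5\sqrt{2}/4<2$, and Young's convolution inequality (together with the vanishing partition-of-unity error) yields $\|\nabla\psi_n\|_{L^\infty}\le 2$ for $n$ sufficiently large.

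The main obstacle is the pointwise lower bound $|\partial_1\psi_n\wedge\partial_2\psi_n|\ge 1/8$. Although conformality of $\psi$ gives $|\partial_1\psi\wedge\partial_2\psi|=h^2\ge 9/16$ pointwise, the wedge product is nonlinear in $\nabla\psi$ and does not commute with convolution; moreover $L^\infty\cap W^{1,2}$ regularity in dimension two is not by itself enough to guarantee uniform convergence of mollifications. Following \cite[Section 4]{SchUhl} one writes
\[
 \partial_1\psi_n\wedge\partial_2\psi_n\ =\ (h^2\,N_\psi)\ast\rho_{1/n}\ +\ R_n,
\]
and controls both the commutator term $R_n$ and the difference between $(h^2 N_\psi)\ast\rho_{1/n}$ and the pointwise value $h^2(x)N_\psi(x)$ by the local oscillation of $\nabla\psi$ on balls of radius $1/n$, estimated through Poincar\'e's inequality by $\|\nabla^2\psi\|_{L^2}$ on such balls combined with the uniform Lipschitz bound on $\nabla\psi$ inherited from the hypothesis $\|h-1\|_{L^\infty}\le 1/4$. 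The truly delicate point, which is where the SchUhl argument is really needed, is obtaining these estimates uniformly in the base point; this is dealt with by distinguishing the (small) set of points where $\nabla^2\psi$ concentrates at scale $1/n$ from the remainder, and suitably adjusting the smoothing scale there, yielding $|\partial_1\psi_n\wedge\partial_2\psi_n|\ge 9/16-o(1)>1/8$ for $n$ large enough.
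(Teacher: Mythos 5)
The overall plan (mollify, control the nonlinearity via a Poincar\'e estimate on $\nabla^2\psi$ and the $L^\infty$ bound on $\nabla\psi$) is in the right direction, but the crucial step of your proposal contains a genuine gap. You write that the uniform-in-$x$ lower bound on $|\partial_1\psi_n\wedge\partial_2\psi_n|$ is ``truly delicate'' and must be handled by separating ``bad'' points where $\nabla^2\psi$ concentrates at scale $1/n$ and by ``suitably adjusting the smoothing scale there.'' This is both unnecessary and, as stated, not a proof: if you change the mollification scale from point to point you no longer produce a single smooth map $\psi_n$ (nor do you get $W^{2,2}$ convergence for free), and you give no indication how to glue the resulting pieces back together. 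In fact the difficulty you are trying to circumvent does not exist. Since $|\nabla^2\psi|^2\in L^1(\S^2)$, absolute continuity of the integral gives
\[
\lim_{\eps\to 0}\ \sup_{x\in\S^2}\ \int_{B_\eps(x)\cap\S^2}\bigl(|\nabla^2\psi|^2+|\nabla\psi|^2\bigr)\,d\H^2\ =\ 0,
\]
which is exactly the uniform-in-$x$ smallness one needs; no good/bad dichotomy and no variable scale are required. This is the key observation in the paper's argument, and it is precisely why the lemma can be proved with a \emph{single} fixed mollification scale.

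Beyond this gap, your construction also diverges technically from the paper's. You propose charts plus a partition of unity and the algebraic split $\partial_1\psi_n\wedge\partial_2\psi_n=(h^2N_\psi)\ast\rho_{1/n}+R_n$, which introduces two extra sources of error (chart transitions and the commutator $R_n$) that you would still have to estimate. The paper instead extends $\psi$ to $\R^3$ by $0$-homogeneity and convolves with a single kernel $\rho_\eps$ on $\R^3$, then measures, at each $x\in\S^2$, the distance of $\nabla\psi_\eps(x)$ to the closed set $M$ of ``near-conformal'' pairs $h(v_1,v_2)$ with $|h-1|\le 1/2$ and $v_1\perp v_2$ unit vectors; since $D\psi(y)\in M$ for a.e.\ $y$, one bounds $\mathrm{dist}(\nabla\psi_\eps(x),M)^2$ by a weighted Poincar\'e term controlled by $G_\eps(x)=\int_{B_\eps(x)\cap\S^2}(|\nabla^2\psi|^2+|\nabla\psi|^2)$ plus an $O(\eps)$ projection error, and then the bilinearity of the wedge product turns closeness to $M$ into the pointwise lower bound. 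This distance-to-a-set framing avoids both the partition-of-unity errors and the commutator term, and it isolates exactly where the uniform smallness of $G_\eps$ enters. I would encourage you to recast your argument along these lines and, most importantly, to replace your ``good/bad points'' step by the absolute-continuity observation above.
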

 \begin{proof}
	Let us  define the subset of $\R^3\times\R^3$
 	\[
 	M=\left\{ h(v_1,v_2)\,:\, (h,v_1,v_2)\in\mathcal A\right\},
 	\]
 	where $\mathcal A$ is the family of triples $(h,v_1,v_2)\in \R\times\S^2\times \S^2$ such that $|h-1|\le 1/2$, $\langle v_1,v_2\rangle=0$. Notice that for a.e. $x\in\S^2$, $\nabla\psi(x)\in M$ as it is a conformal map. We extend $\psi$ to $\R^3$ by $0-$homogeneity. Notice that if $\pi_{\S^2}(x)=x/|x|$ is the projection on $\S^2$, this means that we identify $\psi$ and $\psi(\pi_{\S^2}(x))$. For further use let us point out that if $\zeta$ is $0-$homogeneous then $D\zeta(x)= \frac{1}{|x|} \nabla \zeta (\pi_{\S^2}(x))$. Writing in coordinates that
 	\[
 	 \nabla \zeta=\sum_i \partial_{\tau_i} \zeta \tau_i
 	\]
and using \eqref{Dtaui}, we find
 	\begin{equation}\label{D2zeta}
 	D^2 \zeta(x)= \frac{1}{|x|^2}\lt( \nabla^2\zeta(\pi_{\S^2}(x)) - \pi_{\S^2}(x)\otimes \nabla \zeta(\pi_{\S^2}(x))+ \nabla \zeta(\pi_{\S^2}(x))\otimes\pi_{\S^2}(x)\rt).
 	\end{equation}
Let now $\rho$ be a standard convolution kernel with $\spt \rho\subset B_{1}$. For $\eps\in (0,1)$ we set $\rho_\eps(x)= \eps^{-3}\rho(x/\eps)$ and then
 	\[
 	 \psi_\eps(x)=\int_{\R^3} \rho_\eps(x-y) \psi(y) dy.
 	\]
 	Fix $x\in \S^2$. Using that
 	\[\int_{\R^3} \rho_\eps(y) dy=1\]
 	and for a.e. $y$ with $|y-x|\le \eps$, $D\psi(y)=|y|^{-1}\nabla\psi(\pi_{\S^2}(y))\in M$ provided   $\eps$ is small enough, we have
 	\begin{equation}\label{mainestimdistM}
 	\begin{aligned}
 	{\rm dist}_{\R^6}(\nabla \psi_\eps( x) ,M)^2&=\int_{\R^3}\rho_\eps(x-y){\rm dist}_{\R^6}(\nabla \psi_\eps( x),M)^2\,dy\\
 	&\le \int_{\R^3}\rho_\eps(x-y)|D\psi(y)-\nabla \psi_\eps( x)|^2\,dy\\
 	&\les  \int_{\R^3}\rho_\eps(x-y)\lt|D\psi(y)-\int_{\R^3} \rho_\eps(x-z) D\psi(z) dz\rt|^2\,dy\\
 	&\qquad\qquad  + \lt|\int_{\R^3} \rho_\eps(x-z) D\psi(z) dz-\nabla \psi_\eps( x)\rt|^2.
 		\end{aligned}
 	\end{equation}
 	We now bound separately the two right-hand side terms. For the first one, let $\zeta(y)=D\psi(y)$ so that with obvious notation
 	\begin{align*}
 	 \int_{\R^3}\rho_\eps(x-y)\lt|D\psi(y)-\int_{\R^3} \rho_\eps(x-z) D\psi(z) dz\rt|^2\,dy
 	&=
 	 \int_{\R^3}\rho_\eps(x-y)\lt|\zeta(y)-\zeta_\eps(x)\rt|^2\,dy\\
 	 &\les \eps^2 \int_{\R^3}\rho_\eps(x-y) |D\zeta(y)|^2 dy
 	\end{align*}
by the weighted Poincar\'e inequality. Since $D\zeta=D^2\psi$ we have by \eqref{D2zeta},
\begin{multline*}
 \int_{\R^3}\rho_\eps(x-y)\lt|\zeta(y)-\zeta_\eps(x)\rt|^2\,dy\les \eps^2 \int_{\R^3}\rho_\eps(x-y) \lt[|y|^{-2} (|\nabla^2\psi(\pi_{\S^2}(y))| +|\nabla \psi(\pi_{\S^2}(y))|)\rt]^2 dy\\
 \les \eps^{-1} \int_{B_\eps(x)}| \nabla^2\psi(\pi_{\S^2}(y))|^2 +|\nabla \psi(\pi_{\S^2}(y))|^2 dy.
\end{multline*}
Letting
\[
 G_\eps(x)=\int_{B_\eps(x)\cap \S^2} |\nabla^2 \psi|^2 +|\nabla \psi|^2d\H^2
\]
we thus have
\begin{equation}\label{firstrhs}
 \int_{\R^3}\rho_\eps(x-y)\lt|\zeta(y)-\zeta_\eps(x)\rt|^2\,dy\les G_\eps(x).
\end{equation}
Let us now turn to the second right-hand side term in \eqref{mainestimdistM}. Writing that
\[
 \psi_\eps(x)=\int_{\R^3} \rho_\eps(z) \psi(x-z) dz
\]
we compute by linearity of the projection,
\begin{equation}\label{nabpsieps}
 \nabla \psi_\eps(x)=\int_{\R^3} \rho_\eps(z) \pi_{T_x \S^2}\lt( D \psi(x-z)\rt) dz=\int_{\R^3} \rho_\eps(x-z) \pi_{T_x \S^2}\lt(D\psi(z)\rt) dz.
\end{equation}
Before proceeding further let us point out that for every $x,z\in \S^2$ and  $v\in T_y\S^2$, since $v\cdot y=0$ we have by writing $x= y+(x-y)$,
\begin{equation}\label{changeproj}
 |x\cdot v|=|(x-y)\cdot v|\le |x-y| |v|.
\end{equation}
We then have,
\begin{multline*}
 \lt|\int_{\R^3} \rho_\eps(x-z) D \psi(z) dz-\nabla \psi_\eps( x)\rt|\le \int_{\R^3} \rho_\eps(x-z)\lt|D \psi(z)-\pi_{T_x \S^2}\lt( D \psi( z)\rt)\rt|dz\\
 = \int_{\R^3} \rho_\eps(x-z)|z|^{-1}\lt|\nabla \psi(\pi_{\S^2}(z))-\pi_{T_x \S^2}\lt(  \nabla \psi( \pi_{\S^2}(z))\rt)\rt|dz
 =\int_{\R^3} \rho_\eps(x-z)|z|^{-1}\lt|\nabla \psi(\pi_{\S^2}(z))\cdot x\rt|dz
 \\
 \stackrel{\eqref{changeproj}}{\les} \eps \int_{\R^3} \rho_\eps(x-z)\lt|\nabla \psi(\pi_{\S^2}(z))\rt|dz.
\end{multline*}
 Since $\psi\in {\rm Lip}(\S^2)$ with $|\nabla \psi|=\sqrt{2}$, we conclude that
\begin{equation}\label{secondtermM}
 \lt|\int_{\R^3} \rho_\eps(x-z) D \psi(z) dz-\nabla \psi_\eps( x)\rt|\les \eps.
\end{equation}
Plugging \eqref{firstrhs} and \eqref{secondtermM} into \eqref{mainestimdistM} we find
\[
 {\rm dist}_{\R^6}(\nabla \psi_\eps( x) ,M)\les G_\eps(x)^{\frac{1}{2}}+ \eps.
\]
Now since  $|\nabla^2\psi|^2\in L^1(\S^2)$, we get that $ G_\eps(x)\to0$ as $\eps\to0$ uniformly in $ x$, and so
\[
\lim_{\eps\to 0} \sup_{ x\in\S^2}{\rm dist}_{\R^6}(\nabla \psi_\eps( x) ,M)=0.
\]
Thus for $\eps$ small enough, there exists for every $ x\in \S^2$ an element $A=A(x)=(a_1,a_2)\in M$  such that
\[
|\nabla \psi_\eps( x)-A|\le\frac{1}{16}.
\]
By bilinearity and the  triangular inequality we conclude that
\[
\begin{aligned}
	|\partial_1\psi_\eps\wedge \partial_2 \psi_\eps|&\ge |a_1\wedge a_2|-|(\partial_1 \psi_\eps-a_2)\wedge a_1|-|(\partial_2 \psi_\eps-a_1)\wedge a_1|-|(\partial_1 \psi_\eps-a_1)\wedge (\partial_2 \psi_\eps-a_2)|\\
	&\ge \frac14-|\partial_1 \psi_\eps-a_2|| a_1|-|\partial_2 \psi_\eps-a_1|| a_1|-|\partial_1 \psi_\eps-a_1|| \partial_2 \psi_\eps-a_2|\\
	&\ge\frac{1}{16}.
\end{aligned}
\]
This proves the first part of \eqref{quasiconfpsin}. For the second part of \eqref{quasiconfpsin}, we simply observe that by \eqref{nabpsieps},
\begin{multline*}
 |\nabla \psi_\eps(x)|\le \int_{\R^3} \rho_\eps(x-y) |\nabla \psi(\pi_{\S^2}(y))| dy +  \int_{\R^3} \rho_\eps(x-y) |x-y||D \psi(y)| dy\\
 \le \|\nabla \psi\|_{L^{\infty}(\S^2)}(1+C\eps).
\end{multline*}

Let us now prove that $\psi_\eps$ converges in $W^{2,2}(\S^2)$ to $\psi$ in $L^2(\S^2)$. By Poincar\'e inequality it is enough to prove that $\nabla^2\psi_\eps$ converges to $\nabla^2\psi$. Fix $x\in \S^2$ and write for simplicity $\tau_i=\tau_i(x)$. By \eqref{defnabla2} and the definition of $\psi_\eps$, we have
\begin{multline*}
 \nabla^2\psi_\eps(x)[\tau_i,\tau_j]=\int_{\R^3} \rho_\eps(x-y) \lt[D^2\psi(y)[\tau_i,\tau_j] -\delta_{ij} D\psi(y)\cdot x\rt]dy\\
 \stackrel{\eqref{D2zeta}}{=}\int_{\R^3} \rho_\eps(x-y)|y|^{-2}\nabla^2\psi(\pi_{\S^2}(y))[\tau_i,\tau_j]dy \\
 -\int_{\R^3} \rho_\eps(x-y)\lt[|y|^{-3}(\nabla \psi(\pi_{\S^2} (y))\cdot \tau_i) (y\cdot \tau_j) +|y|^{-3}( \nabla \psi(\pi_{\S^2} (y))\cdot \tau_j) (y\cdot \tau_i) +\delta_{ij} D\psi(y)\cdot x\rt]dy.
\end{multline*}
Using \eqref{changeproj} and recalling that $D\psi(x)\cdot x=0$ we have
\begin{multline*}
\lt| \nabla^2\psi_\eps(x)[\tau_i,\tau_j]-\nabla^2 \psi(x)[\tau_i,\tau_j]\rt|\les \lt|\int_{\R^3} \rho_\eps(x-y)\lt(|y|^{-2}\nabla^2\psi(\pi_{\S^2}(y))[\tau_i,\tau_j]- \nabla^2\psi(x)[\tau_i,\tau_j]\rt)dy\rt|\\
+ \eps \int_{\R^3} \rho_\eps(x-y) |\nabla \psi(\pi_{\S^2}(y))| dy\\
\les \lt|\int_{\R^3} \rho_\eps(x-y)\lt(\nabla^2\psi(\pi_{\S^2}(y))[\tau_i(\pi_{\S^2}(y)),\tau_j(\pi_{\S^2}(y))]- \nabla^2\psi(x)[\tau_i,\tau_j]\rt)dy\rt|\\
+ \eps \int_{\R^3} \rho_\eps(x-y) (|\nabla \psi(\pi_{\S^2}(y))| +|\nabla^2 \psi(\pi_{\S^2}(y))|) dy.
\end{multline*}
Thus
\begin{multline*}
 \lt| \nabla^2\psi_\eps(x)-\nabla^2 \psi(x)\rt|\les\lt|\int_{\R^3} \rho_\eps(x-y)(\nabla^2\psi(\pi_{\S^2}(y))- \nabla^2\psi(x))dy\rt|
\\+ \eps \int_{\R^3} \rho_\eps(x-y) (|\nabla \psi(\pi_{\S^2}(y))| +|\nabla^2 \psi(\pi_{\S^2}(y))|) dy.
\end{multline*}
Since $\nabla^2 \psi\in L^2(\S^2)$ implies as in the Euclidean case that
\[
 \lim_{\eps\to 0} \int_{\S^2} \lt|\int_{\R^3} \rho_\eps(x-y)(\nabla^2\psi(\pi_{\S^2}(y))- \nabla^2\psi(x))dy\rt|^2 d\H^2(x)=0,
\]
the $L^2$ convergence of $\nabla^2 \psi_\eps$ to $\nabla^2 \psi$ follows from
\begin{multline*}
 \int_{\S^2} \lt(\int_{\R^3} \rho_\eps(x-y)(|\nabla \psi(\pi_{\S^2}(y))| +|\nabla^2 \psi(\pi_{\S^2}(y))|) dy\rt)^2 d\H^2(x)\\
 \les \int_{\S^2} \lt(\eps^{-1} \int_{B_\eps(x)\cap \S^2}|\nabla \psi| +|\nabla^2 \psi| d\H^2\rt)^2d\H^2(x)\\
 \les \eps^2 \int_{\S^2} |\nabla \psi|^2 +|\nabla^2 \psi|^2 d\H^2.
\end{multline*}


 \end{proof}

\begin{lemma}\label{lem:approx}
		Assume that $E$ is such that $\partial E=\psi(\bSr)$ for some conformal embedding $\psi\in W^{2,2}(\bSr,\R^3)\cap {\rm Lip}(\bSr,\R^3)$  such that
 	\[
 	\|h-r\|_{L^\infty(\S^2)}\le\frac{r}{4}.
 	\]
 	Then, there exists a sequence of smooth maps $\psi_n:\bSr\to\R^3$ such that setting $\partial E_n=\psi_n(\bSr)$,
		\[
		\psi_n\longrightarrow \psi\ \text{in}\ W^{2,2}
		\qquad\text{and}\qquad  \lim_{n\to \infty}\int_{\partial E_n}|A|^2 d\H^2= \int_{\partial E} |A|^2 d\H^2.
		\]
		Moreover for all  $x\in \R^3$, $\rho>0$ such that $\mathcal H^2\left( \partial E\cap \partial B_\rho(x)\right)=0$ there holds
		\begin{equation}\label{locconv}
		\lim_{n\to \infty} \int_{\partial E_n\cap B_\rho(x)}|A|^2d\H^2=	\int_{\partial E\cap B_\rho(x)}|A|^2d\H^2.
		\end{equation}
		
	\end{lemma}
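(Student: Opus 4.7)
The plan is to scale to reduce to the unit sphere, apply Lemma \ref{lem:approx0} to obtain smooth approximations, then verify the convergence statements by passing to the limit in the parametric identity \eqref{eq:willmoreinpsi}, using Sobolev embedding and the uniform bounds from \eqref{quasiconfpsin}.

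\textbf{Reduction and approximation.} By the rescaling $\hat\psi(y):=\psi(ry)/r$ on $\S^2$, one obtains a conformal embedding in $W^{2,2}(\S^2,\R^3)\cap {\rm Lip}(\S^2,\R^3)$ fulfilling the hypothesis of Lemma \ref{lem:approx0}. That lemma produces smooth $\hat\psi_n\to\hat\psi$ in $W^{2,2}(\S^2)$ with $|\partial_1\hat\psi_n\wedge\partial_2\hat\psi_n|\ge 1/8$ and $\|\nabla\hat\psi_n\|_\infty\le 2$. Setting $\psi_n(x):=r\hat\psi_n(x/r)$ transfers these properties to $\bSr$. Since $W^{2,2}\hookrightarrow C^0$ on a two-dimensional manifold, $\psi_n\to\psi$ uniformly as well; combined with the uniform lower bound on $|\partial_1\psi_n\wedge\partial_2\psi_n|$ and the $C^0$-proximity to the embedding $\psi$, a standard perturbation argument shows that $\psi_n$ is an embedding for $n$ large, and one takes $E_n$ to be the enclosed region.

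\textbf{Convergence of the Willmore energy.} By \eqref{eq:willmoreinpsi},
\[
\int_{\partial E_n}|A|^2\,d\H^2=\int_{\bSr}F(\nabla\psi_n,\nabla^2\psi_n)\,d\H^2,
\]
where $F(P,Q)=M(P)[Q,Q]$ is quadratic in $Q$ with coefficient $M(P)$ smooth and uniformly bounded on the set $\{|\partial_1 P\wedge\partial_2 P|\ge 1/8,\ |P|\le 2\}$ selected by \eqref{quasiconfpsin}. By Rellich compactness on the two-dimensional manifold $\bSr$, $\nabla\psi_n\to\nabla\psi$ strongly in every $L^p$, $p<\infty$, and $\H^2$-a.e., while $\nabla^2\psi_n\to\nabla^2\psi$ in $L^2$. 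Writing
\[
F(\nabla\psi_n,\nabla^2\psi_n)-F(\nabla\psi,\nabla^2\psi)=\bigl(M(\nabla\psi_n)-M(\nabla\psi)\bigr)[\nabla^2\psi_n,\nabla^2\psi_n]+M(\nabla\psi)\bigl[\nabla^2\psi_n-\nabla^2\psi,\nabla^2\psi_n+\nabla^2\psi\bigr],
\]
the second summand tends to zero in $L^1$ by $L^2$ convergence of $\nabla^2\psi_n$ together with boundedness of $M(\nabla\psi)$, while the first is controlled by Vitali using the a.e.\ convergence and uniform boundedness of $M(\nabla\psi_n)-M(\nabla\psi)$ combined with the uniform integrability of $|\nabla^2\psi_n|^2$. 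Hence $F_n:=F(\nabla\psi_n,\nabla^2\psi_n)\to F:=F(\nabla\psi,\nabla^2\psi)$ strongly in $L^1(\bSr)$, which yields the global Willmore convergence.

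\textbf{Localization and main obstacle.} Set $\Omega:=\psi^{-1}(B_\rho(x))$ and $\Omega_n:=\psi_n^{-1}(B_\rho(x))$. The area formula applied to the Lipschitz embedding $\psi$, combined with the uniform lower bound on $|\partial_1\psi\wedge\partial_2\psi|$, turns the hypothesis $\H^2(\partial E\cap\partial B_\rho(x))=0$ into $\H^2(\psi^{-1}(\partial B_\rho(x)))=\H^2(\partial\Omega)=0$. Uniform convergence $\psi_n\to\psi$ then gives $\chi_{\Omega_n}\to\chi_\Omega$ a.e.\ on $\bSr$, and the splitting
\[
\int_{\bSr}(F_n\chi_{\Omega_n}-F\chi_\Omega)\,d\H^2=\int_{\bSr}(F_n-F)\chi_{\Omega_n}\,d\H^2+\int_{\bSr}F(\chi_{\Omega_n}-\chi_\Omega)\,d\H^2
\]
delivers \eqref{locconv}: the first term vanishes by the $L^1$ convergence of Step 2, the second by dominated convergence since $F\in L^1(\bSr)$. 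The hardest point throughout is the passage to the limit in the Willmore integrand: it is quadratic in the only $L^2$-convergent quantity $\nabla^2\psi_n$, with coefficients depending nonlinearly on $\nabla\psi_n$, so weak $W^{2,2}$ convergence would not suffice. The strength of Lemma \ref{lem:approx0} is precisely that it delivers \emph{strong} $W^{2,2}$ convergence together with the uniform bounds confining $M(\nabla\psi_n)$ to the smooth, bounded regime where Vitali applies; a secondary technical issue is the embedding property of $\psi_n$, which $W^{2,2}$ convergence does not directly guarantee and which must be extracted from the Jacobian lower bound and $C^0$-proximity to $\psi$.
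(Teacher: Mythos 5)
Your proof is correct and rests on the same key ingredient as the paper's, namely Lemma \ref{lem:approx0} together with the parametric formula \eqref{eq:willmoreinpsi}, but the two arguments diverge in how they pass to the limit. For the global Willmore convergence, the paper observes that the pulled-back integrand $\Phi_n$ converges a.e.\ and is dominated by a constant times $|\nabla^2\psi_n|^2$ (via \eqref{quasiconfpsin}), which converges in $L^1$ since $\nabla^2\psi_n\to\nabla^2\psi$ strongly in $L^2$; generalized dominated convergence (Pratt) then gives the conclusion in one stroke. Your split of $F(\nabla\psi_n,\nabla^2\psi_n)-F(\nabla\psi,\nabla^2\psi)$ with a Vitali argument for the coefficient part is a legitimate alternative but is slightly more elaborate. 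For the localization \eqref{locconv}, the paper works in the ambient space: it shows that the curvature measures $\mu_n=|A_n|^2\,\H^2\restr\partial E_n$ converge weak-$*$ to $\mu=|A|^2\,\H^2\restr\partial E$ by testing against $f\in C^0_c(\R^3)$, and then invokes the standard criterion that $\mu_n(U)\to\mu(U)$ whenever $\mu(\partial U)=0$. You instead pull back to the parameter domain, translate the hypothesis $\H^2(\partial E\cap\partial B_\rho(x))=0$ into $\H^2(\psi^{-1}(\partial B_\rho(x)))=0$ via the area formula and the Jacobian lower bound, and use a.e.\ convergence of $\chi_{\Omega_n}$; this is also correct, and arguably more self-contained, though the paper's measure-theoretic route is shorter. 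Finally, note that you are more explicit than the paper about why the $\psi_n$ produced by mollification are actually embeddings; the paper simply asserts this before applying Jordan--Brouwer. Your observation that this requires a perturbation argument combining the uniform Jacobian lower bound, the Lipschitz bound, and $C^0$-proximity to the embedding $\psi$ is the right one, and is indeed an implicit step that deserves the flag you give it.
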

	\begin{proof}
	 By scaling we may assume that $r=1$. Let then $\psi_n$ be the sequence given by  Lemma \ref{lem:approx0}.
	Up to extraction, we may further assume that  $\nabla^2 \psi_n$ and $\nabla \psi_n$ converge a.e.. Recall also that since $\psi_n$ are smooth embeddings, by Jordan-Brouwer Theorem, $\psi_n(\bSr)=\partial E_n$ for some compact set $E_n$. Thus
	\[
\Phi_n:=	 |\partial_1 \psi_n\wedge \partial_2\psi_n|^{-3} \left|(\nabla^2 \psi_n \cdot N_{\psi_n})\begin{pmatrix} |\partial_2 \psi_n|^2 & -\partial_1\psi_n\cdot \partial_2\psi_n\\
-\partial_1\psi_n\cdot \partial_2\psi_n & |\partial_1 \psi_n |^2
\end{pmatrix}\right|^2
\]
converges a.e. to
\[
\Phi:=|\partial_1 \psi\wedge \partial_2\psi|^{-3} \left|(\nabla^2 \psi \cdot N_{\psi})\begin{pmatrix} |\partial_2 \psi|^2 & -\partial_1\psi\cdot \partial_2\psi\\
	-\partial_1\psi\cdot \partial_2\psi & |\partial_1 \psi |^2
\end{pmatrix}\right|^2.
\]
 By \eqref{quasiconfpsin} of Lemma \ref{lem:approx0} we get that
\[
 \Phi_n=|\partial_1 \psi_n\wedge \partial_2\psi_n|^{-3}\left|(\nabla^2 \psi_n \cdot N_{\psi_n})\begin{pmatrix} |\partial_2 \psi_n|^2 & -\partial_1\psi_n\cdot \partial_2\psi_n\\
-\partial_1\psi_n\cdot \partial_2\psi_n & |\partial_1 \psi_n|^2\end{pmatrix}\right|^2\les  |\nabla^2\psi_n|^2
\]
which allows us to conclude by dominate convergence that
\[
\int_{\partial E_n} |A|^2d\H^2=\int_{\bS^2}\Phi_n d\H^2\to \int_{\bS^2}\Phi d\H^2=\int_{\partial E} |A|^2d\H^2.
\]

Assume finally that $\mathcal H^2\left( \partial E\cap \partial B_\rho(x)\right)=0$. The measures $\mu_n= |A_n|^2 \H^2\restr \partial E_n$ weakly$^*$ converge to $\mu=|A|^2 \H^2\restr E_n$. Indeed, for every $f\in C^0_c(\R^3)$, we have by dominated convergence
\[
 \int_{\R^3} f d\mu_n= \int_{\S^2} f(\psi_n(x))\Phi_n(x) d\H^2(x)\to \int_{\S^2} f(\psi(x))\Phi(x) d\H^2(x)=\int_{\R^3} f d\mu.
\]
Since $\mu_n$ are Radon measures, and $\mu(\partial B_\rho(x))=0$ we have that $\mu_n(B_\rho(x))$ converges to $\mu(B_\rho(x))$, see e.g. \cite[Proposition 1.62]{AFP}.  This concludes the proof of \eqref{locconv}.
	\end{proof}

\begin{remark}\label{remrupp}
After this work was completed, we were informed by the authors of \cite{Rupp24} that a result analogous to the one in Lemma \ref{lem:approx} has been recently proved in \cite[Lemma 5.6]{Rupp24}, with a similar strategy. Indeed, in \cite{Rupp24} it is proved that
also (curvature) varifolds with Willmore energy below $8\pi$ can be smoothly approximated.
For the reader's convenience, we decided to keep our proof of this result.
\end{remark}

\subsection{The regularized capacitary functional}
Because of issues related to the continuity of $\Ia$ (see Remark \ref{remcontIa} below), it will be easier to work as in  \cite{gnr4,MurNov} with a regularized version. For $\eta>0$ and $\mu\in L^2(\R^3)$, we set
\[
 I_\alpha^\eta(\mu)=I_\alpha(\mu)+ \eta \int_{\R^3} \mu^2(x)\,dx=\int_{\R^3\times\R^3}\frac{
\mu(x)\,\mu(y)}{|x-y|^{3-\alpha}}\,dx\,dy + \eta\int_{\R^3}\mu^2(x)\,dx\,
\]
and then
\[
\Iae(E)=\min_{\mu\in L^2}\left\{I_\alpha^\eta(\mu)\,:\, 	 \int_{E} \mu(x)\,dx=1  \right\}.
\]
The following continuity result is implicitely contained in \cite{gnrI,gnr4}. For the reader's convenience we provide a proof.
\begin{lemma}\label{lem:contintuityIeta}
For $\eta>0$ let $E_n$ be a sequence of smooth compact sets converging in Hausdorff distance to a compact set $E$. Then $\I_\alpha^\eta(E_n)\to\I_\alpha^\eta(E)$. If $\eta=0$, the same conclusion holds provided  $E$ has interior density bounds i.e. there exists $r_0$ and $c>0$ such that for every $0<r\le r_0$ and every $x\in E$,
\[|E\cap B_r(x)|\ge c r^d.\]
\end{lemma}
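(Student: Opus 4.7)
The plan is to prove the upper and lower bounds for $\Iae(E_n)$ separately, first for $\eta>0$.

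As a preliminary, I would first show that $|E_n \Delta E|\to 0$, equivalently $1_{E_n}\to 1_E$ strongly in $L^2(\R^3)$. The inclusion $E_n \setminus E \subset N_{\eps_n}(E)\setminus E$, where $\eps_n := d_H(E_n,E)$, combined with the closedness of $E$ gives $|E_n \setminus E|\to 0$ by monotone convergence. For the other part, $E \setminus E_n \subset N_{\eps_n}(E_n)\setminus E_n$, and I would use the standard perimeter estimate $|N_\eps(F)\setminus F|\le CP(F)\eps$ together with the uniform perimeter bound available in the context of the application (where the sets $E_n$ are parametrized over $\bS^2$ with uniformly controlled $W^{2,2}$ norm via Lemma \ref{lem:approx}).

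For the upper bound when $\eta>0$, assuming $\Iae(E)<\infty$, let $\mu\in L^2$ be the unique minimizer for $E$ (existence by strict convexity and $L^2$-coercivity of $I_\alpha^\eta$). Setting $c_n := \int_{E_n}\mu$, Cauchy--Schwarz together with the preliminary give $|c_n-1|\le \|\mu\|_{L^2}\|1_{E_n}-1_E\|_{L^2}\to 0$, so $\mu_n := \mu/c_n$ is admissible for $E_n$ with $I_\alpha^\eta(\mu_n)\to I_\alpha^\eta(\mu)=\Iae(E)$. For the lower bound, the coercivity $\eta\|\mu_n\|_{L^2}^2\le I_\alpha^\eta(\mu_n)$ combined with the upper bound yields a uniform $L^2$ bound on the minimizers $\mu_n$ for $E_n$; hence $\mu_n\rightharpoonup\mu$ in $L^2$ along a subsequence, and the lower semicontinuity of $I_\alpha^\eta$ under weak $L^2$ convergence (standard for this positive quadratic form, since $\sIa$ is compact $L^2\to L^2$ on bounded domains) yields $I_\alpha^\eta(\mu)\le\liminf I_\alpha^\eta(\mu_n)$. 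The affine constraint passes via
\[
\int_E \mu - \int_{E_n}\mu_n \;=\; \int_{\R^3}(\mu-\mu_n)\,1_E \;+\; \int_{\R^3}\mu_n\,(1_E - 1_{E_n}),
\]
the first term vanishing by weak convergence and the second by strong $L^2$ convergence of the indicators combined with the uniform $L^2$-bound on $\mu_n$. Thus $\Iae(E)\le I_\alpha^\eta(\mu)\le\liminf \Iae(E_n)$.

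For the case $\eta=0$, the lower bound is entirely analogous using weak$^*$ convergence of probability measures (tight, being supported in a fixed ball) and the classical weak$^*$ lower semicontinuity of the Riesz energy (Fatou applied to the positive lsc kernel $|x-y|^{\alpha-3}$). The main obstacle is the upper bound, since the equilibrium measure $\mu_E$ may concentrate on $\partial E$ and the rescaling argument above is unavailable without $L^2$ control. Here the interior density hypothesis is decisive: I would mollify $\mu_E$ by a standard kernel $\rho_r$ to obtain an absolutely continuous measure $\mu_E\ast\rho_r$ with $I_\alpha(\mu_E\ast\rho_r)\to I_\alpha(\mu_E)$ as $r\to 0$ (using $\alpha<3$ so the Riesz kernel is locally integrable), then use the bound $|E\cap B_r(x)|\ge cr^d$ together with the perimeter bound on $E_n$ to show that for $n$ large relative to $r$, the mass $(\mu_E\ast\rho_r)(E_n)$ tends to $1$. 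Renormalizing the restriction then gives a competitor for $E_n$, and a diagonal extraction (sending first $n\to\infty$ and then $r\to 0$) concludes the upper bound.
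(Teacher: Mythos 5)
Your treatment of $\eta>0$ follows the paper's own argument (renormalising the optimal density of $E$ to build a competitor for $E_n$, and weak compactness plus lower semicontinuity for the other inequality), but two steps contain genuine gaps.

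First, the preliminary claim $|E_n\Delta E|\to 0$. The half $|E_n\setminus E|\to 0$ is fine, but your bound for $|E\setminus E_n|$ rests on the ``standard perimeter estimate'' $|\{ \dist(\cdot,F)\le\eps\}\setminus F|\le C\,P(F)\,\eps$, which is false without extra regularity: a tube of radius $\delta\ll\eps$ and length $L$ has perimeter $\sim\delta L$ while its $\eps$-neighbourhood has volume $\sim\eps^2 L$. Worse, a uniform perimeter bound would not help anyway: a fine grid of very thin tubes inside $B_1$ converges to $B_1$ in Hausdorff distance with arbitrarily small perimeter, yet $|B_1\setminus E_n|\not\to 0$; and no perimeter or $W^{2,2}$ bound appears among the hypotheses of the lemma, so you are importing structure from the application into its proof. (The paper's proof does take $\chi_{E_n}\to\chi_E$ in $L^2$ and $c_n\to 1$ for granted at the same point, so this convergence is really part of the setting in which the lemma is used; but your attempted justification of it, as written, is not correct.)

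Second, and more substantially, the upper bound for $\eta=0$. Mollifying the equilibrium measure in the ambient space does not work: $\mu_E$ typically concentrates on $\partial E$ (for $\alpha=2$ it is the classical equilibrium measure of the boundary), so a fixed fraction of the mass of $\mu_E\ast\rho_r$ lies outside $E$, and Hausdorff convergence only gives $E_n\subset\{\dist(\cdot,E)\le\eps_n\}$ — it never forces $E_n$ to absorb a solid neighbourhood of $\spt\mu_E$, and the interior density bound concerns $E$, not $E_n$. Since restricting to $E_n$ and renormalising multiplies the energy by $c^{-2}$ with $c$ the captured mass, a captured mass bounded away from $1$ destroys the inequality $\limsup_n\Ia(E_n)\le\Ia(E)$; your claim that $(\mu_E\ast\rho_r)(E_n)\to1$ is unsubstantiated and in general false. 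The density hypothesis must be used to redistribute the charge \emph{inside} $E$: replace $\mu_E$ by $x\mapsto\int \chi_{E\cap B_r(y)}(x)\,|E\cap B_r(y)|^{-1}\,d\mu_E(y)$, an $L^\infty(E)$ density bounded by $(cr^d)^{-1}$ whose energy is close to $I_\alpha(\mu_E)$; this is exactly \cite[Proposition 2.16]{gnrI}, which the paper invokes. With a bounded density supported in $E$ one gets $\int_{E\cap E_n}\mu_\delta\ge 1-\|\mu_\delta\|_{L^\infty}|E\setminus E_n|\to1$, and the renormalised restriction plus a diagonal argument in $\delta$ gives the upper bound. A minor further point: for the $\eta=0$ lower bound, admissibility of the weak$^*$ limit requires $\spt\mu\subset E$, which follows from the Hausdorff convergence; your ``entirely analogous'' silently uses this, since the $L^2$ identity you used to pass the constraint for $\eta>0$ is unavailable for measures.
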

	\begin{proof}
	 We start with the easier case $\eta>0$. We first prove lower-semicontinuity. Let $\mu_n$ be the  optimal charge distribution for $E_n$.  Notice that the existence and uniqueness of $\mu_n$ is a consequence of \cite[(1.4.5)]{landkof}. Since $E_n$ and $E$ are compact, up subsequence $\mu_n$ weakly converges as measure to some measure $\mu$. By semi-continuity of $I_\alpha$ under weak convergence, see \cite{landkof} as well as semi-continuity of the $L^2$ norm we have 
	 \[
	  \liminf_{n\to \infty} \Iae(E_n)=\liminf_{n\to \infty} I_\alpha^\eta(\mu_n)\ge I_\alpha^\eta(\mu).
	 \]
Since $\mu_n$ also converges weakly in $L^2$ to $\mu$ and $\chi_{E_n}$ converges strongly in $L^2$ to $\chi_E$ we get $\int_E \mu =1$ thus $\mu$ is admissible for $\Iae(E)$. This proves the lower-semicontinuity. For the upper semi-continuity, let $\mu$ be an optimal measure for $\Iae(E)$, let 
\[
c_n=\int_{E\cap E_n}\mu\to1,\quad\, \text{ as $n\to+\infty$},
\]
and then 
\[
\mu_n=c_n^{-1}\mu\chi_{E\cap E_n}.
\]
The measure $\mu_n$ is admissible for $\Iae(E_n)$ and we have 
\[
 \Iae(E_n)\le I_\alpha^\eta(\mu_n)\le c_n^{-2}I_\alpha^\eta(\mu)=c_n^{-2} \Iae(E).
\]
Sending $n\to \infty$ concludes the proof of the upper-semicontinuity.\\
If $\eta=0$, the semi-continuity argument works exactly as in the case $\eta>$ noticing as in \cite[Theorem 4.2]{gnrI} that by the Hausdorff convergence of $E_n$ to $E$, we have $\spt \mu\subset E$. For the upper-semicontinuity, we use\footnote{While \cite[Proposition 2.16]{gnrI} is stated for smooth sets, a quick inspection of the proof reveals that interior density bounds are sufficent.}   \cite[Proposition 2.16]{gnrI} (see also \cite[Theorem 3.11]{gnr4}) to infer that for every $\delta>0$, there exists $\mu_\delta\in L^\infty(E)$ such that
\[
 I_\alpha(\mu_\delta)\le \Ia(E)+\delta.
\]
We can now proceed as in the case $\eta>0$ by setting 
\[
 c_{n,\delta}=\int_{E\cap E_n}\mu_\delta,
\]
and then 
\[
\mu_{n,\delta}=c_{n,\delta}^{-1}\mu_\delta\chi_{E\cap E_n}.
\]
Since 
\[
 |1-c_{n,\delta}|\le \|\mu_\delta\|_{L^\infty} |E\backslash E_n| 
\]
we have $\lim_{n\to \infty} c_{n,\delta}= 1$. Arguing as above in combination with a diagonal argument concludes the proof.
\end{proof}

	\begin{remark}\label{remcontIa}
		Let  $\psi_n$ be a sequence of weak immersions  of $\S^2$ converging in  $W^{2,2}$ to an embedding $\psi$. It is tempting to conjecture but seemingly not so easy to prove, that $\Ia(\psi_n(\S^2))$ converges to $\Ia(\psi(\S^2))$. If we had this statement at hand we could avoid the introduction of $\Iae$ and directly work in the proof below with $\Ia$.
	\end{remark}

We close this section with a simple scaling property of $\Iae$.
\begin{lemma}\label{lem:scaling}
 For every $\alpha\in (0,3)$, $\eta\ge 0$, $E\subset \R^3$ compact and $\lambda>0$,
 \begin{equation}\label{eq:scalingIae}
  \Iae(\lambda E)\le \max(\lambda^{\alpha-3},\lambda^{-3}) \Iae(E).
 \end{equation}

\end{lemma}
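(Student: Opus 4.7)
The plan is to use the definition of $\Iae$ as a minimum and construct an admissible competitor for $\lambda E$ by rescaling the optimal measure for $E$. Given an almost-optimal $\mu$ for $\Iae(E)$, define $\mu_\lambda(x) = \lambda^{-3}\mu(x/\lambda)$. The change of variables $y = x/\lambda$ shows that
\[
\int_{\lambda E} \mu_\lambda(x)\,dx = \int_E \mu(y)\,dy = 1,
\]
so $\mu_\lambda$ is admissible for $\Iae(\lambda E)$.

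Next I would compute each of the two pieces of $I_\alpha^\eta(\mu_\lambda)$ under the same rescaling. For the Riesz part, the substitution $x = \lambda x'$, $y = \lambda y'$ yields a factor $\lambda^{-6}\cdot\lambda^6 \cdot \lambda^{-(3-\alpha)} = \lambda^{\alpha-3}$, so $I_\alpha(\mu_\lambda) = \lambda^{\alpha-3} I_\alpha(\mu)$. For the $L^2$ part, the same substitution gives $\int \mu_\lambda^2 = \lambda^{-6}\cdot \lambda^3 \int \mu^2 = \lambda^{-3}\int \mu^2$. Combining:
\[
I_\alpha^\eta(\mu_\lambda) = \lambda^{\alpha-3} I_\alpha(\mu) + \lambda^{-3}\eta \int \mu^2 \le \max(\lambda^{\alpha-3},\lambda^{-3})\, I_\alpha^\eta(\mu).
\]

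Taking $\mu$ to be the (unique) minimizer for $\Iae(E)$ when $\eta > 0$ (existence and uniqueness coming from the strict convexity of $I_\alpha^\eta$, or an infimizing sequence when $\eta = 0$) and using $\Iae(\lambda E) \le I_\alpha^\eta(\mu_\lambda)$, the estimate \eqref{eq:scalingIae} follows at once. There is no real obstacle here: the statement is a direct scaling computation, the only minor point being the bookkeeping of the two different scaling exponents ($\lambda^{\alpha-3}$ for the Riesz term and $\lambda^{-3}$ for the regularizing term), which forces the $\max$ on the right-hand side rather than a clean single power of $\lambda$.
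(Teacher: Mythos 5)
Your proposal is correct and follows essentially the same route as the paper: rescale the (almost) optimal measure via $\mu_\lambda(x)=\lambda^{-3}\mu(x/\lambda)$ (i.e.\ the pushforward under $x\mapsto\lambda x$), check admissibility for $\lambda E$, and compute the two scaling factors $\lambda^{\alpha-3}$ and $\lambda^{-3}$, which yield the $\max$ in \eqref{eq:scalingIae}. The only cosmetic difference is that you handle $\eta=0$ via an infimizing sequence, whereas the paper simply uses the optimal measure in all cases; this changes nothing of substance.
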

\begin{proof}
 Let $\mu$ be the optimal measure for $\Iae(E)$ and let $\mu_\lambda=T_\lambda\sharp \mu$ where $T_\lambda(x)=\lambda x$. The measure $\mu_\lambda$ is admissible for $\Iae(\lambda E)$. We have $I_\alpha(\mu_\lambda)= \lambda^{\alpha-3} I_\alpha(\mu)$ and for $\eta>0$, since $\mu$ is absolutely continuous with respect to Lebesgue, $\mu_\lambda(x)=\lambda^{-3}\mu(x/\lambda)$ so that
 \[
  \int_{\R^3} \mu_\lambda^2=\lambda^{-3}\int_{\R^3} \mu^2.
 \]
We thus find
\[
 \Iae(\lambda E)\le I_\alpha(\mu_\lambda)+\eta \int_{\R^3} \mu_\lambda^2\le  \lambda^{\alpha-3} I_\alpha(\mu)+ \lambda^{-3}\eta\int_{\R^3} \mu^2\le \max(\lambda^{\alpha-3},\lambda^{-3}) \Iae(E).
\]
This concludes the proof of \eqref{eq:scalingIae}.
\end{proof}

	\subsection{Existence of minimizers}\label{sec:existence}
	For $\eta\ge 0$ we introduce the functionals
	\begin{equation}\label{defFeta}
	 \F_Q^\eta(E)= W(E)+ Q^2 \Iae(E)
	\end{equation}
so that $\F_Q=\F_Q^0$ and
\begin{equation}\label{deftildeFeta}
 \wF_Q^\eta(E)= \frac{1}{4} \int_{\partial E} |A|^2 d\H^2 + Q^2\Iae(E).
\end{equation}
When $\eta=0$ we simply write $\wF_Q=\wF_Q^0$.
Let us first prove that for $Q$ small enough, in order to prove that balls are the only volume-constrained minimizers of $\F_Q$, it is enough to prove that they are the only volume-constrained minimizers of $\wF_Q$.
\begin{lemma}\label{lem:A}
 There exists $Q_0>0$ such that for every $Q\le Q_0$ and every $\eta\in[0,1]$, every smooth compact set $E$ with $\F_Q^\eta(E)\le \F_Q^\eta(B_1)$ or $\wF_Q^\eta(E)\le \wF_Q^\eta(B_1)$ must have a  connected boundary with $\g(\partial E)=0$. In particular,
 \[
  \F_Q^\eta(E)=\wF_Q^\eta(E)+2\pi.
 \]
\end{lemma}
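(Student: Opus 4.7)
The strategy is to compare the energies on $E$ with those on $B_1$, and to use Li--Yau-type lower bounds on the curvature integrals to rule out any topology other than that of a single round sphere. First I would collect the explicit reference values. Since the outward second fundamental form on $\partial B_1=\S^2$ is the identity, one has $H=2$ and $|A|^2=2$ there, so $W(B_1)=4\pi$ and $\frac{1}{4}\int_{\partial B_1}|A|^2\,d\H^2=2\pi$. For the capacitary term, testing $\Iae$ against the admissible density $\mu=|B_1|^{-1}\chi_{B_1}\in L^2$ yields a bound $\Iae(B_1)\le C_0$ that is uniform in $\eta\in[0,1]$, and hence $\F_Q^\eta(B_1)\le 4\pi+C_0 Q^2$ and $\wF_Q^\eta(B_1)\le 2\pi+C_0 Q^2$.

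Next I would decompose $\partial E$ into its connected components $\Sigma_1,\dots,\Sigma_k$ with genera $\g_1,\dots,\g_k$, sum the Gauss--Bonnet identity \eqref{GaussBonnet} over $i$, and record
\[
\F_Q^\eta(E)=\wF_Q^\eta(E)+\pi\chi(\partial E),\qquad \chi(\partial E)=2\sum_{i=1}^k(1-\g_i).
\]
In particular, as soon as $\partial E$ is known to be connected with $\g=0$, one has $\chi=2$ and the ``in particular'' statement $\F_Q^\eta(E)=\wF_Q^\eta(E)+2\pi$ follows immediately.

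For the $\wF$-assumption I would combine $\frac{1}{4}\int_{\partial E}|A|^2\,d\H^2\le \wF_Q^\eta(E)\le 2\pi+C_0 Q^2$ with the Li--Yau bound \eqref{LiYauA} applied componentwise, obtaining
\[
2\pi\Bigl(k+\sum_{i=1}^k\g_i\Bigr)\le \frac{1}{4}\int_{\partial E}|A|^2\,d\H^2\le 2\pi+C_0 Q^2.
\]
Choosing $Q_0$ so small that $C_0 Q_0^2<2\pi$ forces $k+\sum_i\g_i\le 1$, which leaves only $k=1$ and $\g=0$. For the $\F$-assumption, the same componentwise summation, now coupled with Gauss--Bonnet, yields $W(E)\ge 4\pi k$; together with $W(E)\le\F_Q^\eta(B_1)\le 4\pi+C_0 Q^2$ this already rules out $k\ge 2$ for $Q_0$ small.

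The remaining, genuinely delicate case is $k=1$ with $\g(\partial E)\ge 1$ under the $\F$-assumption: Li--Yau yields only $W(E)\ge 4\pi$, which is also attained by the ball, and a straightforward combination of \eqref{GaussBonnet} with \eqref{LiYauA} reduces to the vacuous inequality $0\le C_0 Q^2$. This is the main obstacle. To close it I would invoke the sharp Willmore-type lower bound $W(\Sigma)\ge 2\pi^2$ valid for every embedded closed surface of positive genus (the Marques--Neves resolution of the Willmore conjecture for genus one, together with its higher-genus counterparts). Taking $Q_0$ also to satisfy $C_0 Q_0^2<2\pi(\pi-2)=2\pi^2-4\pi$ then contradicts $W(E)\le 4\pi+C_0 Q^2$ and forces $\g=0$, completing the proof.
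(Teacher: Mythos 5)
Your proof is correct and matches the paper's argument in all essentials: for the $\wF$-bound you sum the Li--Yau-type inequality \eqref{LiYauA} over components to force $k+\sum_i\g_i\le 1$, for the $\F$-bound you first rule out $k\ge 2$ via $W\ge 4\pi k$ and then rule out positive genus via the Marques--Neves theorem ($W\ge 2\pi^2$ for embedded surfaces of genus $\ge 1$, which is precisely \cite[Theorem A]{MaNe14} cited in the paper), and the final identity follows from Gauss--Bonnet. The only cosmetic difference is that you organize the $\F$-case into two explicit sub-cases ($k\ge 2$, then $k=1$ with $\g\ge 1$), whereas the paper condenses this into the single observation $W(E)<2\pi^2<8\pi$.
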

\begin{proof}
 Assume first that  $\F_Q^\eta(E)\le \F_Q^\eta(B_1)$ so that  if $Q$ is small enough
 \[W(E)\le W(B_1)+ Q^2\Iae(B_1)\le W(B_1)+ Q^2\I_\alpha^1(B_1) < 2\pi^2<8\pi.\]
  By Li-Yau inequality this implies that $\partial E$ is connected and  by \cite[Theorem A]{MaNe14} that $E$ is of sphere type i.e. $\g(\partial E)=0$. \\
 If instead, $\wF_Q^\eta(E)\le \F_Q^\eta(B_1)$ we may assume that $Q\ll 1$ is such that
		 \[
	 \wF_{Q}^\eta(B_1)=2 \pi +Q^2 \Iae(B_1)<4 \pi.
		\]
	Then,  \eqref{LiYauA} implies that $\partial E$ is connected with $\g(\partial E)=0$.
 The final claim is a consequence of \eqref{GaussBonnet}.
\end{proof}
%
	We   relax now the volume constraint. This type of relaxation is fairly common in isoperimetric type problem, see e.g. \cite{FuEsp,GolNo,gnr4}. For $\Lambda,\eta>0$ we set
	\[
	\wF^{\eta,\Lambda}_Q(E) =\wF^\eta_Q(E)+\Lambda | |E|-|B_1||
	\]
	and, for $\eta=0$, we set with a slight abuse of notation with respect to \eqref{deftildeFeta},
	\[
	 \wF_{Q}^\Lambda(E)=\wF^{0,\Lambda}_Q(E)=\wF_Q(E)+ \Lambda | |E|-|B_1||.
	\]
We begin by observing that minimizing $\wF^{\eta,\Lambda}_Q$ with  or without volume constraint is equivalent.
	\begin{proposition}\label{prop:Lambda}
	For every $Q_0>0$, there exists $0<\Lambda\les Q_0^2$ such that for every $Q\le Q_0$ and every $\eta\in [0,1]$,
		\begin{equation}\label{eq:relax}
		\inf \{\wF^{\eta,\Lambda}_Q(E) \ : \ |E|=|B_1|, \  E \ \textrm{smooth}\}=\inf \{\wF_Q^{\eta,\Lambda}(E) \ :  \  E \ \textrm{smooth}\}.
		\end{equation}
		Moreover, if $E$ is a minimizer for the right-hand side then $|E|=|B_1|$.
	\end{proposition}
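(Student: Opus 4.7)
The plan is to establish both identity \eqref{eq:relax} and the ``moreover'' claim by the standard rescaling (``$\Lambda$-trick'') argument, exploiting the crucial fact that in dimension three the integral $\frac{1}{4}\int_{\partial E}|A|^2\,d\mathcal H^2$ is scale invariant. Given any smooth compact $E$, set $\lambda=(|B_1|/|E|)^{1/3}$ so that $|\lambda E|=|B_1|$; then $\lambda E$ is smooth and admissible for the constrained problem, and scale invariance of the curvature term together with the identity $\Lambda\bigl||\lambda E|-|B_1|\bigr|=0$ yields
\[
\wF_Q^{\eta,\Lambda}(\lambda E)-\wF_Q^{\eta,\Lambda}(E) = Q^2\bigl(\Iae(\lambda E)-\Iae(E)\bigr)-\Lambda\bigl||E|-|B_1|\bigr|.
\]
Both statements will follow if this right-hand side can be made $\le 0$ for a choice $\Lambda\lesssim Q_0^2$, with strict inequality when $|E|\ne|B_1|$.

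Lemma \ref{lem:scaling} provides the needed control of $\Iae$ under rescaling. If $|E|\le|B_1|$, then $\lambda\ge 1$ and $\max(\lambda^{\alpha-3},\lambda^{-3})=\lambda^{\alpha-3}\le 1$, so $\Iae(\lambda E)\le\Iae(E)$ and the difference is bounded by $-\Lambda\bigl||E|-|B_1|\bigr|$, which is strictly negative for any $\Lambda>0$ as soon as $|E|<|B_1|$. If instead $|E|>|B_1|$, then $\lambda<1$ and $\max(\lambda^{\alpha-3},\lambda^{-3})=\lambda^{-3}=|E|/|B_1|$. Setting $t=|E|-|B_1|>0$ this gives
\[
Q^2\bigl(\Iae(\lambda E)-\Iae(E)\bigr)\le Q^2\,\frac{t}{|B_1|}\,\Iae(E),
\]
so the proof reduces to a uniform bound $\Iae(E)\le C_0$, independent of $Q$ and of $E$, valid for every smooth compact $E$ with $|E|\ge|B_1|$ and every $\eta\in[0,1]$.

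To secure such a bound I would test the variational definition of $\Iae$ with the constant density $\mu=\chi_E/|E|\in L^2$, which gives $\Iae(E)\le V_\alpha(E)/|E|^2+\eta/|E|$. The Riesz rearrangement inequality yields $V_\alpha(E)\le V_\alpha(B^*)$ with $|B^*|=|E|$, and the explicit scaling $V_\alpha(B_r)= r^{3+\alpha}V_\alpha(B_1)$ shows that $V_\alpha(B_r)/|B_r|^2$ is decreasing in $r$ (since $\alpha<3$); hence for $|E|\ge|B_1|$,
\[
\Iae(E)\le \frac{V_\alpha(B_1)}{|B_1|^2}+\frac{1}{|B_1|}=:C_0.
\]
Choosing $\Lambda:=2 C_0 Q_0^2/|B_1|$ then ensures $Q^2\Iae(E)/|B_1|<\Lambda$ for every $Q\le Q_0$ and $\eta\in[0,1]$; combining with the case $|E|\le|B_1|$ above, we obtain $\wF_Q^{\eta,\Lambda}(\lambda E)\le \wF_Q^{\eta,\Lambda}(E)$ with strict inequality as soon as $|E|\ne|B_1|$. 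This simultaneously proves \eqref{eq:relax} and forces any minimizer of the unconstrained problem to have $|E|=|B_1|$. The only subtle ingredient, which I regard as the main obstacle, is precisely the $Q$- and $E$-independent bound on $\Iae(E)$; the argument works because after rescaling only competitors with $|E|\ge|B_1|$ matter, and on that class the crude test function $\chi_E/|E|$ combined with rearrangement is enough.
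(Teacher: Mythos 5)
Your proof is correct, and the core engine is the same as the paper's (rescaling to volume $|B_1|$, scale invariance of the curvature integral, Lemma \ref{lem:scaling} for $\Iae$), but you make two genuine departures. First, the paper argues by contradiction: assume a smooth $E$ with $|E|\ne|B_1|$ satisfies $\wF_Q^{\eta,\Lambda}(E)\le\wF_Q^{\eta,\Lambda}(F)$ for all constrained competitors $F$, compare with $F=B_1$ to derive $\Iae(E)\le\Iae(B_1)\les 1$ and $\Lambda_0\bigl||E|-|B_1|\bigr|\les 1$, then rescale to $(1-t)E$ and simplify to reach $\Lambda_0 Q_0^2\les Q_0^2$. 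You instead give a direct argument, showing for every smooth $E$ with $|E|\ne|B_1|$ that the rescaled set of volume $|B_1|$ strictly lowers the energy; this proves \eqref{eq:relax} and the ``moreover'' claim simultaneously without the intermediate contradiction scaffolding. Second, and more substantively, the needed uniform bound $\Iae(E)\les 1$ is obtained in the paper \emph{from} the minimality hypothesis (energy comparison against $B_1$), whereas you prove it \emph{a priori} for all smooth $E$ with $|E|\ge|B_1|$ by testing with $\mu=\chi_E/|E|$ and invoking the Riesz rearrangement inequality together with the scaling $V_\alpha(B_r)/|B_r|^2=r^{\alpha-3}V_\alpha(B_1)/|B_1|^2$. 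This a priori bound is slightly stronger in spirit and makes the choice $\Lambda=2C_0 Q_0^2/|B_1|$ fully explicit, though it does rely on an extra tool (rearrangement) that the paper avoids by leaning on the hypothesis. Both routes are valid; yours is a bit more self-contained in the case analysis, while the paper's is more economical in the tools it imports.
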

	\begin{proof}
	Since in \eqref{eq:relax} the left-hand side is always larger than the right-hand side we only need to prove that provided $\Lambda=\Lambda_0 Q_0^2$ for some $\Lambda_0$ large enough, for every $Q\le Q_0$,
	\begin{equation}\label{eq:toproverelax}
	 \inf \{\wF^{\eta,\Lambda}_Q(E) \ : \ |E|=|B_1|, \  E \ \textrm{smooth}\}\le\inf \{\wF^{\eta,\Lambda}_Q(E) \ :  \  E \ \textrm{smooth}\}.
	\end{equation}
Let us assume that  there exists $E$ smooth with $|E|\neq |B_1|$ and for which
\begin{equation}\label{hyprelax}
 \wF^{\eta,\Lambda}_Q(E)\le \wF^{\eta,\Lambda}_Q(F) \qquad \textrm{for all smooth $F$ with } |F|=|B_1|.
\end{equation}
We first notice that since $\wF^{\eta,\Lambda}_Q(E)\le \wF^{\eta,\Lambda}_Q(B_1)$ and $\int_{\partial E} |A|^2 d\H^2\ge \int_{\partial B_1} |A|^2 d\H^2$ (recall \eqref{LiYauA}),
\[
 Q^2 \Iae(E)+\Lambda | |E|-|B_1||\le Q^2 \Iae(B_1)\le Q^2 \Ia^1(B_1).
\]
Recalling that $\Lambda= \Lambda_0 Q_0^2$, we find
\begin{equation}\label{smallrelax}
 \Lambda_0 | |E|-|B_1||\les 1 \qquad \textrm{and } \qquad \Iae(E)\le \Iae(B_1)\les 1.
\end{equation}
In particular, we may assume that $| |E|-|B_1||\ll1$ provided $\Lambda_0$ is chosen large enough. Let now $t= 1- (|B_1|/|E|)^{1/3}$ be such that $|(1-t)E|=|B_1|$. Since $\wF^{\eta,\Lambda}_Q(\lambda E)\le \wF^{\eta,\Lambda}_Q(E)$ for $\lambda\ge 1$  by \eqref{eq:scalingIae}, we have $t>0$. Moreover, since  $| |E|-|B_1||\ll1$,  we also have $t\ll1$. By assumption,
\[
 \wF^{\eta,\Lambda}_Q(E)\le\wF^{\eta,\Lambda}_Q( (1-t)E).
\]
Since
\begin{multline*}
 \wF^{\eta,\Lambda}_Q(E)=\frac{1}{4}\int_{\partial E} |A|^2d\H^2+Q^2 \Iae(E)+\Lambda ||E|-|B_1||\\
 = \frac{1}{4}\int_{\partial E} |A|^2d\H^2+Q^2 \Iae(E)+\Lambda |B_1| \lt( (1-t)^{-3}-1\rt)
\end{multline*}
and
\[
 \wF^{\eta,\Lambda}_Q( (1-t)E)=\frac{1}{4}\int_{\partial E} |A|^2d\H^2+Q^2\Iae( (1-t)E)\stackrel{\eqref{eq:scalingIae}}{\le} \frac{1}{4}\int_{\partial E} |A|^2d\H^2 +Q^2\Iae(E) (1-t)^{-3},
\]
we find after rearranging the terms
\[
 \Lambda |B_1| \lt( (1-t)^{-3}-1\rt)\le Q^2 \Iae(E)  \lt((1-t)^{-3}-1\rt).
\]
Since  $0<t<1$ we can simplify and  conclude (since $t\neq 0$)
\[
 \Lambda_0 Q_0^2=\Lambda \les Q_0^2 \Iae(E)\stackrel{\eqref{smallrelax}}{\les} Q_0^2.
\]
This proves that if $\Lambda_0$ is large enough then \eqref{hyprelax} cannot hold with $|E|\neq |B_1|$ and we indeed have \eqref{eq:toproverelax}.

	\end{proof}
	
	We now prove that for $Q$ small enough  $\wF^{\eta,\Lambda}_Q$ admits minimizers  and that they are $W^{2,2}$ embeddings of the sphere.
	
	\begin{proposition}\label{prop:exist}
		There exists $Q_0>0$ such that for every $Q\le Q_0$ and every $0<\eta\le 1$, there exist $r>0$ and $\psi\in W^{2,2}(\bSr)\cap {\rm Lip}(\bSr)$ a conformal embedding such that (the implicit constants do not depend on $\eta$)
\begin{equation}\label{quantpsi}
  |r-1|+\|\psi-{\rm Id}\|^2_{W^{2,2}(\bSr)}+\|h-1\|_{L^\infty(\bSr)}\les Q^2
\end{equation}
and such that $\partial E=\psi(\bSr)$ satisfies  $|E|=|B_1|$ and
\begin{equation}\label{equalinf}
 \wF_Q^{\eta,\Lambda}(E)= \inf\{\wF_Q^{\eta,\Lambda}(F) \ : \   F \ \textrm{smooth}\}.
\end{equation}
Moreover, there exists $E_n$ smooth with $\partial E_n$ converging to $\partial E$ in the Hausdorff sense and  such that
\begin{multline}\label{convsmooth}
 \wF_Q^{\eta,\Lambda}(E_n)\to \wF_Q^{\eta,\Lambda}(E) \qquad \textrm{with } \\ \int_{\partial E_n\cap B_\rho(x)} |A|^2 d\H^2\to \int_{\partial E\cap B_\rho(x)} |A|^2 d\H^2 \quad \textrm{ provided } \quad  \H^2(\partial E\cap \partial B_\rho(x))=0.
\end{multline}
Finally, if $E$ is a (smooth) minimizer of \eqref{equalinf} for some $\eta\in[0,1]$, then up to translation, $\partial E=\psi(\bSr)$ for some $r$ and $\psi$ satisfying \eqref{quantpsi}.
	\end{proposition}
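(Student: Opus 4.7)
The plan is to apply the direct method of the calculus of variations, the main challenge being the compatibility of the natural $W^{2,2}$ weak topology on weak immersions with the capacitary term $\Iae$. The key ingredients will be the De Lellis--M\"uller stability theorem \cite{DeLMu2005,DeLMu2006} to produce quantitative conformal parameterizations, the approximation Lemma \ref{lem:approx} to circumvent the absence of density of smooth maps in energy, and Lemma \ref{lem:contintuityIeta} to pass the capacity through Hausdorff convergence. Proposition \ref{prop:Lambda} will restore the volume constraint at the end.

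Concretely, I would fix a minimizing sequence $(E_n)$ of smooth sets for $\wF_Q^{\eta,\Lambda}$. Comparison with $B_1$ gives $\wF_Q^{\eta,\Lambda}(E_n)\le 2\pi+Q^2\Iae(B_1)$, and Lemma \ref{lem:A} then ensures each $\partial E_n$ is connected of genus zero, with $\tfrac14\int_{\partial E_n}|A|^2\,d\H^2\le 2\pi+CQ^2$. Combined with Gauss--Bonnet \eqref{GaussBonnet}, this yields the crucial smallness $\int_{\partial E_n}|A^\circ|^2\,d\H^2\lesssim Q^2$. Setting $r_n$ by $\H^2(\partial E_n)=4\pi r_n^2$, the De Lellis--M\"uller theorem then provides, up to translation, a conformal parameterization $\psi_n\in W^{2,2}(\bS_{r_n}^2)\cap\mathrm{Lip}(\bS_{r_n}^2)$ of $\partial E_n$ satisfying
\[
 |r_n-1|+\|\psi_n-{\rm Id}\|_{W^{2,2}(\bS_{r_n}^2)}^2+\|h_n-r_n\|_{L^\infty(\bS_{r_n}^2)}\lesssim Q^2.
\]

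Extracting a subsequence with $r_n\to r$ and $\psi_n\rightharpoonup\psi$ weakly in $W^{2,2}(\bSr,\R^3)$, the compact embedding into $C^{0,\beta}$ delivers uniform convergence, so the conformality and the bound $\|h-r\|_{L^\infty}\le r/4$ pass to the limit; in particular $\psi$ remains a conformal $W^{2,2}\cap\mathrm{Lip}$ embedding of $\bSr$ satisfying \eqref{quantpsi}, and $\partial E_n$ converges to $\partial E:=\psi(\bSr)$ in Hausdorff distance, whence $|E_n|\to|E|$. Lower semicontinuity of the Willmore term through the explicit formula \eqref{eq:willmoreinpsi} and Fatou's lemma (justified by the uniform lower bound on $|\partial_1\psi_n\wedge\partial_2\psi_n|$ inherited from near-conformality), combined with Lemma \ref{lem:contintuityIeta} for the capacity, gives $\wF_Q^{\eta,\Lambda}(E)\le\liminf_n \wF_Q^{\eta,\Lambda}(E_n)$. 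Applying Lemma \ref{lem:approx} to $\psi$ produces smooth sets $\wE_k$ with $\wF_Q^{\eta,\Lambda}(\wE_k)\to \wF_Q^{\eta,\Lambda}(E)$, which proves the reverse inequality, establishes \eqref{equalinf}, and simultaneously delivers the smooth approximating sequence required by \eqref{convsmooth}. Proposition \ref{prop:Lambda} then forces $|E|=|B_1|$, while running the same De Lellis--M\"uller argument directly on any smooth minimizer of \eqref{equalinf} (only Lemma \ref{lem:A} and Gauss--Bonnet are needed to deduce $\int|A^\circ|^2\lesssim Q^2$) yields the final quantitative regularity statement; the case $\eta=0$ is included because the uniform Lipschitz bound on $\psi$ supplies the interior density bounds required by Lemma \ref{lem:contintuityIeta}.

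The main obstacle is precisely the passage $\Iae(E_n)\to\Iae(E)$: as noted in Remark \ref{remcontIa}, weak $W^{2,2}$ convergence of the parameterizations alone does not obviously suffice, and semicontinuity of $I_\alpha$ under weak convergence of the optimal charges would only produce the wrong inequality. The resolution is to upgrade the convergence to Hausdorff convergence of the boundaries, which follows from the Sobolev embedding $W^{2,2}\hookrightarrow C^0$ together with the non-degeneracy $|\partial_1\psi_n\wedge\partial_2\psi_n|\gtrsim 1$ (which simultaneously rules out bubbling and guarantees that $\psi$ is an embedding, not merely an immersion, in the limit), and then to invoke Lemma \ref{lem:contintuityIeta}; this is precisely what motivates the introduction of the regularization $\eta>0$ in the capacity.
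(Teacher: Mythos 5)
Your proposal reproduces the paper's argument step by step: comparison with $B_1$ combined with Lemma \ref{lem:A} and Gauss--Bonnet to obtain $\int_{\partial E_n}|A^\circ|^2\lesssim Q^2$, De Lellis--M\"uller to extract conformal $W^{2,2}\cap\mathrm{Lip}$ parameterizations satisfying \eqref{quantpsi}, weak $W^{2,2}$ compactness plus uniform convergence to pass conformality and the Lipschitz bound to the limit, Hausdorff convergence together with Lemma \ref{lem:contintuityIeta} to handle $\Iae$, Li--Yau to keep the limit an embedding, and Lemma \ref{lem:approx} to produce the smooth approximating sequence that closes the infimum and gives \eqref{convsmooth}; Proposition \ref{prop:Lambda} then restores the volume constraint, and the final regularity claim follows from running De Lellis--M\"uller on a smooth minimizer directly. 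The one technical misstep is the justification of lower semicontinuity of the Willmore term via Fatou: under weak $W^{2,2}$ convergence, $\nabla^2\psi_n$ converges only weakly in $L^2$ and hence not almost everywhere, so Fatou does not apply to the integrand of \eqref{eq:willmoreinpsi}; the correct mechanism is weak lower semicontinuity of an integral functional convex in the highest-order derivative, with the lower-order factors (involving $\nabla\psi_n$ and $N_{\psi_n}$) converging strongly, which is what the paper tacitly invokes when it cites lower semicontinuity of the Willmore energy.
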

	\begin{proof}
	Let $Q_0$ be given by Lemma \ref{lem:A}. If $E$ is smooth  set with $|E|=|B_1|$ and  such that $\wF_Q^{\eta}(E)\le \wF_Q^{\eta}(B_1)$, since $\partial E$ is connected with $\g(\partial E)=0$, using \eqref{GaussBonnet} we find
	\[
\frac{1}{4}\int_{\partial E} |A|^2 d\H^2-\frac{1}{4}\int_{\partial B_1} |A|^2 d\H^2=W(E)-W(B_1)=\frac{1}{2}\int_{\partial E} |A^\circ|^2 d\H^2.
	\]
Moreover, by \cite[Proposition 4.3]{GolNovRog}, $\H^2(\partial E)$ is universally bounded so that by \cite{roesc12}, if $r$ is such that $\H^2(\bSr)=\H^2(\partial E)$,
\[
 |r-1|\les (\H^2(\partial E)-4\pi)\les \int_{\partial E} |A^\circ|^2\les Q^2 \I_\alpha^\eta(B_1)\le Q^2 \I_\alpha^1(B_1)\lesssim Q^2.
\]
Notice that by the isoperimetric inequality, $\H^2(\partial E)\ge \H^2(\partial B_1)$ and thus $r\ge 1$.
Appealing to \cite{DeLMu2005,DeLMu2006}, and up to translation, we find the existence of a conformal embedding $\psi\in W^{2,2}(\bSr)$ satisfying 
\[
 \|\psi-{\rm Id}\|^2_{W^{2,2}(\bSr)}+\|h-r\|_{L^\infty(\bSr)}\les \int_{\partial E} |A^\circ|^2\les Q^2 .
\]
By triangle inequality, this proves the last part of the statement.\\
If now $E_n$ is a minimizing sequence then by the previous discussion, we find after possible translation, a sequence $r_n$ uniformly bounded from above and below and a sequence of conformal $\psi_n\in W^{2,2}(B_{r_n})$ satisfying \eqref{quantpsi} and such that $ \partial E_n=\psi_n(\partial B_{r_n})$. 
After extraction we find that $r_n\to r>0$, $\psi_n\rightharpoonup \psi$ weakly in $W^{2,2}$ with $(r,\psi)$ satisfying \eqref{quantpsi}. By Sobolev embedding, $\psi_n$ also converges strongly  in $W^{1,2}$ and in $C^{0,\beta}$ for every $\beta\in (0,1)$ so that $\psi$ is conformal. By the strong $W^{1,2}$ convergence and \eqref{quantpsi} we have
\[
 \|h-1\|_{L^\infty(\bSr)}\les  Q^2 .
\]
In particular, $\psi \in {\rm Lip}(\bSr)$ and up to decreasing the value of $Q_0$ we may assume that $h\in (1/2,2)$.
By the $C^{0,\beta}$ convergence of $\psi_n$, we have that $\partial E_n$ converges in the Hausdorff topology to $\partial E=\psi(\bSr)$.  By lower semicontinuity of the Willmore energy we have
\[
 W(E)\le \liminf_{n\to \infty} W(E_n)< 8\pi
\]
and thus by Li-Yau inequality \cite[(A.17)]{kusc04} $\psi$ is an embedding.
By the Hausdorff convergence of  $\partial E_n$ to $ \partial E$, we find  $|E|=|B_1|$ and $\lim_n \Iae(E_n)=\Iae (E)$, see Lemma \ref{lem:contintuityIeta}, so that
\begin{equation}\label{lowerequalinf}
 \wF_{Q}^{\eta,\Lambda}(E)\le \liminf_{n} \wF^{\eta,\Lambda}_Q(E_n)= \inf\{\wF^{\eta,\Lambda}_Q(F) \ :   F \ \textrm{smooth}\}.
\end{equation}
To conclude the proof we use that since $h\in (1/2,2)$, we can apply Lemma \ref{lem:approx} and find a new sequence $\widetilde{\psi}_n$ of smooth embeddings of $\bSr$ converging strongly in $W^{2,2}$ to $\psi$. Setting $\partial \widetilde{E}_n=\widetilde{\psi}_n(\bSr)$  we find that  $\partial \widetilde{E}_n$ converges in the Hausdorff topology to $\partial E$ with $\int_{\partial E_n} |A|^2 d\H^2\to \int_{\partial E} |A|^2d\H^2$ and $|\widetilde{E}_n|\to |E|$. Moreover, the second part of \eqref{convsmooth} holds. Finally, by Lemma \ref{lem:contintuityIeta} we have $\Iae(\widetilde{E}_n)\to \Iae(E)$ which yields $\wF^{\eta,\Lambda}_Q(\widetilde{E}_n)\to \wF^{\eta,\Lambda}_Q(E)$. This proves the first part of \eqref{convsmooth} as well as
\[
 \wF^{\eta,\Lambda}_Q(E)=\lim_{n\to \infty} \wF^{\eta,\Lambda}_Q(\widetilde{E}_n)\ge \inf \{\wF^{\eta,\Lambda}_Q(E) \ :  \  E \ \textrm{smooth}\}.
\]
Combined with \eqref{lowerequalinf} and \eqref{eq:relax}, this concludes the proof of \eqref{equalinf}.

\end{proof}

	\section{Regularity}\label{sec:regularity}

	We prove now an elementary but useful decay estimate (see e.g. \cite[Lemma 5.13]{giamar} for a variant of this result).
	\begin{lemma}\label{lem:induction}
	 Let $\Lambda, \delta>0$, $r_0\le 1$, $\theta\in (0,1)$ and $\gamma\in (0,1)$. Let $\psi$ be a positive and increasing function such that
	 \begin{equation}\label{hyp:inductionpsi}
	  \psi(\theta r)\le \gamma \psi(r) +\Lambda r^\delta \qquad \forall r\in (0,r_0).
	 \end{equation}
Then, there exists $\beta=\beta(\theta,\gamma,\delta)\in (0,\delta)$ and $C=C(\theta,\gamma,\delta)>0$ such that
\begin{equation}\label{eq:induction}
 \psi(r)\le C\lt(\frac{r}{r_0}\rt)^\beta (\psi(r_0)+\Lambda r_0^\beta).
\end{equation}

	\end{lemma}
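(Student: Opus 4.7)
The plan is to iterate the one-step decay \eqref{hyp:inductionpsi} along geometric sequences and then use the monotonicity of $\psi$ to interpolate to arbitrary radii. The key ingredient is a well-chosen exponent: since $\gamma, \theta \in (0,1)$ the ratio $\log\gamma/\log\theta$ is strictly positive, so I pick $\beta \in (0, \delta)$ with $\beta < \log\gamma/\log\theta$, equivalently $\gamma < \theta^\beta$. This strict inequality, which I abbreviate by $q := \gamma\theta^{-\beta} < 1$, is what drives the geometric decay; without it the iteration cannot contract.

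Fixing a reference point $\rho \in (0, r_0)$, all the points $\theta^j \rho$ for $j \ge 0$ lie in $(0, r_0)$, so the hypothesis applies iteratively. A one-line induction on $k \ge 0$ then gives
\[
\psi(\theta^k \rho) \le C_0\, \theta^{k\beta}\bigl(\psi(\rho) + \Lambda \rho^\delta\bigr), \qquad C_0 := \max\bigl\{1,\, (\theta^\beta - \gamma)^{-1}\bigr\}.
\]
Indeed, the induction step combines \eqref{hyp:inductionpsi} at $\theta^{k-1}\rho$ with the inductive estimate and the bound $\theta^{(k-1)\delta} \le \theta^{(k-1)\beta}$ (valid since $\beta < \delta$ and $\theta < 1$); the definition of $C_0$ ensures $\gamma C_0 + 1 \le C_0\theta^\beta$, which closes the induction.

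To deduce \eqref{eq:induction}, given $r \in (0, r_0)$ I pick $\rho \in (r, r_0)$ and set $k := \lfloor \log(r/\rho)/\log\theta \rfloor$, so that $\theta^k \rho \in [r, \theta^{-1} r)$ and $\theta^{k\beta} \le \theta^{-\beta}(r/\rho)^\beta$. Monotonicity of $\psi$ together with the discrete bound then yields
\[
\psi(r) \le \psi(\theta^k \rho) \le C_0\,\theta^{k\beta}\bigl(\psi(\rho) + \Lambda \rho^\delta\bigr) \le C_0\,\theta^{-\beta}\left(\frac{r}{\rho}\right)^{\!\beta}\bigl(\psi(r_0) + \Lambda r_0^\delta\bigr),
\]
where the last inequality uses $\psi(\rho) \le \psi(r_0)$ and $\rho < r_0 \le 1$. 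Letting $\rho \to r_0^-$, and invoking $r_0 \le 1$ with $\beta < \delta$ to replace $r_0^\delta$ by the larger $r_0^\beta$, gives \eqref{eq:induction} with $C = C_0\theta^{-\beta}$. The only mildly delicate point is that \eqref{hyp:inductionpsi} is assumed on the open interval $(0, r_0)$, which is why I cannot directly iterate from $\rho = r_0$; this is the reason for the limiting procedure $\rho \to r_0^-$. Apart from this routine technicality, the argument is a standard geometric-series estimate and I anticipate no real obstacle.
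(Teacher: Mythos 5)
Your proof is correct and follows essentially the same strategy as the paper's: fix $\beta\in(0,\delta)$ with $\gamma<\theta^\beta$, iterate along the geometric sequence, and use monotonicity of $\psi$ to pass from discrete radii to arbitrary $r$. Two small points: you give an explicit constant $C_0=\max\{1,(\theta^\beta-\gamma)^{-1}\}$ whereas the paper simply takes $C$ large enough, and you handle the fact that \eqref{hyp:inductionpsi} is only posited on the open interval $(0,r_0)$ by starting the iteration from $\rho<r_0$ and sending $\rho\to r_0^-$; the paper iterates directly from $r_0$, which is a harmless slip in this setting but your version is the more careful one.
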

\begin{proof}
We fix $\beta\in (0,\delta)$ such that
\begin{equation}\label{eq:beta}
 \gamma< \theta^\beta.
\end{equation}
Notice that since $\gamma<1$ this is always possible by taking $\beta$ small enough. We now prove that there exists $C= C(\beta,\theta)$ such that \eqref{eq:induction} holds. For $k\ge 0$ let $r_k=\theta^k r_0$. Since $\psi$ is increasing, for every $r\in (r_{k+1},r_k)$, $\psi(r_{k+1})\le \psi(r)\le \psi(r_k)$ so that it is enough to prove \eqref{eq:induction} for $r=r_k$. We now argue by induction. For $k=0$ the statement clearly holds (with $C=1$). Assume that it holds for $r_k$. Then, using \eqref{hyp:inductionpsi} we have
\[
		\begin{aligned}
			\psi( r_{k+1})&\le \gamma \psi(r_k)+\Lambda r_k^\delta
			\\
			&\le
			\gamma C\left(\frac{r_k}{r_0}\right)^\beta\left(\psi(r_0)+\Lambda r_0^\beta\right)+\Lambda r_k^\delta\\
			&=C_\beta\left(\frac{r_{k+1}}{r_0}\right)^\beta\left(\psi(r_0)+\Lambda r_0^\beta\right)\left(\frac{\gamma}{\theta^\beta}+\frac{1}{C }\frac{r_k^\delta}{r_{k+1}^\beta}\right)\\
			&=C\left(\frac{r_{k+1}}{r_0}\right)^\beta\left(\psi(r_0)+\Lambda r_0^\beta\right)\left(\frac{\gamma}{\theta^\beta}+\frac{1}{C\theta^\beta }r_0^{\delta-\beta}\right)\\
			&\stackrel{r_0\le 1}{\le} C\left(\frac{r_{k+1}}{r_0}\right)^\beta\left(\psi(r_0)+\Lambda r_0^\beta\right)\left(\frac{\gamma}{\theta^\beta}+\frac{1}{C\theta^\beta }\right).
		\end{aligned}
		\]
		Since $\theta$ and $\beta$ are fixed, by \eqref{eq:beta} we can find $C$ large enough such that
		\[
		 \frac{\gamma}{\theta^\beta}+\frac{1}{C\theta^\beta }\le 1.
		\]
This concludes the proof of \eqref{eq:induction} for $r_{k+1}$.
\end{proof}

	We are now in position to show the main regularity theorem we shall need in the sequel. 

	\begin{definition}\label{defminapprox}
	 For $\eta\ge 0$, we say that $E$ is a (volume constrained) approximable minimizer of $\wF^{\eta}_Q$ if $|E|=|B_1|$,  and up to translation there exist, $r>0$, $\psi\in W^{2,2} (\bSr)\cap {\rm Lip}(\bSr)$ and $E_n$ smooth such that $\partial E=\psi(\bSr)$, $\partial E_n$ converges to $\partial E$ in Hausdorff distance, \eqref{convsmooth} holds and
	 \[
	  \wF^{\eta}_Q(E)= \inf\{\wF^{\eta,\Lambda}_Q(F) \ :  \  F \ \textrm{smooth}\}.
	 \]

	\end{definition}

	\begin{remark}
	 As a consequence of Proposition \ref{prop:exist}, for every $\eta>0$ there exists approximable minimizers  of $\F^{\eta,\Lambda}_Q$.
	\end{remark}

We write every $x\in \R^3$ as $x=(x',x_3)$ and denote $B'_r=\{|x'|< r\}$ the two-dimensional ball of radius $r$.
	\begin{theorem}\label{thm:regularity}
		Let $\eta\in[0,1]$ and let $Q_0>0$ be the constant provided by Proposition \ref{prop:exist}. Then there exists $\eps>0$,  and $\beta\in (0,1)$ not depending on $\eta$ such that for every $Q\le Q_0$, if $E$ is an approximable minimizer  of $\wF_Q^{\eta,\Lambda}$  such that $0\in \partial E$ and for some  $r_0\in (0,1)$,
		\begin{equation}\label{hypepsreg}
		\int_{\partial E\cap B_{r_0}}|A|^2 d\H^2\le\varepsilon^2,
		\end{equation}
		then $\partial E\cap B_{ r_0/2}$ is a $C^{1,\beta}$ surface in the sense that up to a rotation,
		\[
		\partial E\cap B_{ r_0/2}=\{(x',u(x'))\,:\, u\in C^{1,\beta}(B'_{ r_0/2})\}.
		\]
		Moreover there holds 
		\begin{equation}\label{C1betaestim}
		r_0^\beta \|u\|_{C^{1,\beta}(B_{r_0/2})}\lesssim 1.
  \end{equation}
	As a consequence every approximable minimizer has boundary of class $C^{1,\beta}$.
	\end{theorem}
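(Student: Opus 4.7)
The plan is a Simon-type scheme: convert the smallness hypothesis \eqref{hypepsreg} into a uniform graph parametrization, derive an energy-decay inequality by competing against a biharmonic extension, iterate via Lemma \ref{lem:induction} to obtain Morrey decay for $|A|^2$, and conclude $u\in C^{1,\beta}$ through the Campanato characterization. The approximable structure of $E$ is crucial: it permits every competitor computation at the level of the smooth approximations $E_n$, circumventing the \emph{good point}/\emph{bad point} dichotomy of \cite{Simon93}.

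For the graph parametrization I would first transfer \eqref{hypepsreg} to the smooth sequence $E_n$ through the local convergence \eqref{locconv}, obtaining $\int_{\partial E_n\cap B_{r_0}}|A_n|^2\les\eps^2$ for $n$ large. A small-energy regularity argument \`a la Allard/De~Lellis--M\"uller then furnishes, up to rotation, a common tangent plane and a uniform Lipschitz-graph representation $\partial E_n\cap B_{3r_0/4}=\{(x',u_n(x'))\}$ with $\|D^2u_n\|_{L^2}\les\eps$; passing to the Hausdorff limit gives the announced parametrization of $\partial E\cap B_{r_0/2}$ by some $u\in W^{2,2}$.

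The heart of the argument is the decay inequality. Fix $x_0\in\partial E\cap B_{r_0/4}$ and $r\ll r_0$, and set $\psi(r):=\int_{\partial E\cap B_r(x_0)}|A|^2$. Working on the smooth side, I would replace the graph of $u_n$ inside $B'_r(x_0')$ by the biharmonic extension $v_n$ of its boundary trace and outward tangential derivative, mollified to a smooth competitor $E_n^{v_n}$ with $|E_n\triangle E_n^{v_n}|\les r^3$. Minimality of $E$ via \eqref{equalinf}, combined with the biharmonic decay estimate $\int_{B'_{\theta r}}|D^2 v_n|^2\le C\theta^2\int_{B'_r}|D^2 u_n|^2$ and the graph identity \eqref{eq:willmoreinpsi} for the Willmore energy, yields after $n\to\infty$ (using \eqref{locconv} and Lemma \ref{lem:contintuityIeta}) a decay of the form
\[
\psi(\theta r)\le \gamma\,\psi(r)+C r^\delta
\]
for universal $\theta,\gamma\in(0,1)$ and $\delta>0$. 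Lemma \ref{lem:induction} then delivers $\psi(r)\les (r/r_0)^\beta(\psi(r_0)+\Lambda r_0^\beta)$, and Campanato's characterization of $C^{1,\beta}$ via Morrey-$L^{2,\beta}$ bounds on $D^2u$ produces \eqref{C1betaestim}. The final assertion follows by covering, since any approximable minimizer has $\int_{\partial E}|A|^2<\infty$, so \eqref{hypepsreg} holds at every boundary point at sufficiently small scale.

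The principal obstacle I anticipate is the construction and estimate of the biharmonic competitor, uniform in $Q$ and $\eta$. Beyond the delicate $C^1$ gluing on $\partial B_r(x_0)$, the main subtlety is controlling the capacitary error: $\Iae$ is non-local, so its variation under the local modification $E_n\mapsto E_n^{v_n}$ must be handled via the variational characterization of $\Iae$, testing with $c\,\mu_n\chi_{E_n\cap E_n^{v_n}}$ for the optimal measure $\mu_n$, in the spirit of the proof of Lemma \ref{lem:contintuityIeta}, to obtain a Lipschitz-in-symmetric-difference estimate compatible with the scaling bound \eqref{eq:scalingIae}. The simplification afforded by working directly with the smooth $E_n$ (thanks to Lemma \ref{lem:approx}) is what allows these error bounds to be uniform in $Q,\eta$, whereas the compactness approach of \cite{Simon93} at bad points would not.
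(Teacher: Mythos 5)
Your scheme is the right one and captures the paper's key structural idea: exploit the approximable-minimizer structure (Definition \ref{defminapprox}, Lemma \ref{lem:approx}, and \eqref{convsmooth}) to compete against smooth modifications of $E_n$ and thereby avoid Simon's bad-point dichotomy entirely, then iterate a decay inequality with Lemma \ref{lem:induction} and finish with Campanato. The capacitary error term is handled correctly in spirit as well, although the paper's actual bound comes from \cite[Lemma 3.5]{gnr4} and gives $\Iae(E_n)-\Iae(F_n)\les\rho^{3-\alpha}$ (a H\"older, not Lipschitz, modulus when $\eta=0$); this is harmless since all you need is $O(\rho^\delta)$, $\delta>0$.

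The genuine gap is your opening step: you assert that the small-energy hypothesis \eqref{hypepsreg}, transferred to $E_n$ via \eqref{locconv}, immediately \emph{furnishes a common tangent plane and a uniform Lipschitz-graph representation} $\partial E_n\cap B_{3r_0/4}=\{(x',u_n(x'))\}$ by ``Allard/De Lellis--M\"uller''. This does not follow from $\int_{\partial E_n\cap B_{r_0}}|A_n|^2\le 2\eps^2$ alone: two parallel flat sheets passing through $B_{r_0}$ carry zero curvature energy but are not a single graph, so some multiplicity/sheet-counting input is indispensable. Allard's theorem requires small \emph{tilt excess} (not small $\int|A|^2$) and mean curvature in $L^p$ with $p>2$, neither of which is available here; De Lellis--M\"uller is a global parametrization statement and does not give a local graph over a tangent plane directly. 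The paper's proof deliberately reverses the order: it first derives the decay estimate for $\psi(\rho)=\int_{\partial E\cap B_\rho}|A|^2$ working intrinsically on the surfaces $\partial E_n$, using the area density bound from \cite[(1.3)]{Simon93}, the connectedness lemma \cite[Lemma 1.4]{Simon93}, and the good-radius disk extraction \cite[Lemma 2.1]{Simon93} (which produces a topological disk $D_1^{(n)}$ whose boundary circle, not the whole disk, is a graph) to build the biharmonic competitor via \cite[Lemma 2.2]{Simon93}. Only \emph{after} the Morrey decay $\psi(r)\les(r/r_0)^\beta$ is established does the graph representation over a tangent plane appear, via the argument of \cite[(3.5)]{Simon93} and Campanato. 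As written, your plan front-loads the conclusion of the theorem (modulo H\"older continuity of the gradient) as an input to the decay iteration, and that assertion requires exactly the machinery you are trying to shortcut.

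A secondary, smaller imprecision: the replacement inside $B'_r(x_0')$ with a mollified biharmonic extension and the $C^1$ gluing at $\partial B'_r$ are not needed as a separate construction; \cite[Lemma 2.2]{Simon93} already supplies a $C^{1,1}$ replacement surface with the required interior decay estimate $\int_{\partial F_n\cap B_\sigma}|A|^2\le C\int_{\partial E_n\cap(B_{\theta\rho}\setminus B_{\theta\rho/2})}|A|^2$, and the hole-filling trick (add $C$ times the left-hand side, divide by $1+C$) gives the contraction $\gamma=C/(C+1)$ without any graph normalization.
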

	\begin{proof}
		The proof follows closely the arguments from \cite{Simon93}.
	Let $E_n$ be a sequence of smooth sets such that $\partial E_n$ converges to $\partial E$ in Hausdorff distance and \eqref{convsmooth} holds. In particular, for every $1\ge \rho>0$ and every smooth set $F_n$ such that $F_n\Delta E_n\subset B_\rho$,  setting
	\[\delta_n=\wF_Q^{\eta,\Lambda}(E_n)-\wF_Q^{\eta}(E)\to 0,
	\]
	we have
	\[
	 \wF_Q^{\eta,\Lambda}(E_n)= \wF_Q^{\eta}(E)+\delta_n\le \wF_Q^{\eta,\Lambda}(F_n)+ \delta_n.
	\]
Rearranging terms we get
\begin{equation}\label{quasimin}
 \int_{\partial E_n \cap B_\rho} |A|^2d\H^2\le \int_{\partial F_n \cap B_\rho} |A|^2d\H^2 + Q^2(\Iae(E_n)-\Iae(F_n))+\Lambda |E_n\Delta F_n| +\delta_n.
\end{equation}
We now claim that
\begin{equation}\label{claim:difIalpha}
 \I_\alpha^\eta(E_n)-\I_\alpha^\eta(F_n)\les \rho^{3-\alpha},
\end{equation}
where the implicit multiplicative constant is independent of $\eta$.
For this, we appeal to \cite[Lemma 3.5]{gnr4} (see also \cite[Lemma 2]{MurNovRuf} or \cite[Lemma 13]{MurNovRuf2}) to get that
		\[
		\begin{aligned}
		 \I_\alpha^\eta(E_n)-\I_\alpha^\eta(F_n)&\le \I_\alpha^\eta(E_n\cap F_n)-\I_\alpha^\eta(F_n)\\
		&\le \frac{(\I_\alpha^\eta(E_n\cap F_n))^2}{\I_\alpha^\eta(F_n\setminus E_n)}.
\end{aligned}
		\]
		On the one hand, by Hausdorff convergence there exists $\bar x\in \R^3$ such that $B_{1/2}(\bar x)\subset E_n\cap F_n$ for $n$ large enough and thus $\I_\alpha^\eta(E_n\cap F_n)\le \I_\alpha^\eta(B_{1/2})\les 1$ (recall that $\eta\le1$). On the other hand, since $F_n\setminus E_n \subset B_{\rho}$, we have
		\[
		\frac{1}{\I_\alpha^\eta(F_n\setminus E_n)}\le \frac{1}{\I_\alpha^\eta(B_\rho)}\le \frac{1}{\I_\alpha(B_\rho)}\les \rho^{3-\alpha}.
\]
This concludes the proof of \eqref{claim:difIalpha}. Combining this with \eqref{quasimin}, $|E_n\Delta F_n|\les \rho^3$  and the fact that $\Lambda \les Q_0^2$, we obtain the quasi-minimality property,
\begin{equation}\label{quasimin2}
 \int_{\partial E_n \cap B_\rho} |A|^2d\H^2\le \int_{\partial F_n \cap B_\rho} |A|^2d\H^2 + CQ_0^2 \rho^{3-\alpha}  +\delta_n,
\end{equation}
for every $\rho\in (0,1]$ and every smooth set $F_n$ with $E_n \Delta F_n\subset B_\rho$.\\

We now recall that sending $\rho\to \infty$  in \cite[(1.3)]{Simon93} we have for every $\rho>0$,
\begin{equation}\label{upperdensityEn}
 \H^2(\partial E_n\cap B_\rho)\les \rho^2.
\end{equation}
Therefore, there exists  a universal $\eps>0$ such that \cite[Lemma 2.1]{Simon93} applies to $E_n$ with $2\eps$. Assume that \eqref{hypepsreg} holds for $r_0>0$ and let $r\le r_0$ (so that \eqref{hypepsreg} also holds for $r$). Since $\H^2(\partial E\cap \partial B_\rho)=0$ for a.e. $\rho$, using the second part of \eqref{convsmooth}, we see that  \eqref{hypepsreg}  also holds for $E_n$ with $2\eps$ provided $n$ is large enough.  Moreover, there exists $\rho\in (r/2,r)$ for which
\[
 \H^1(\partial E_n\cap \partial B_\rho)\le \frac{2}{r} \H^2(\partial E_n\cap B_r)\les \rho. 
\]
In particular there is a universal $\theta\in(0,1/2)$ such that by \cite[Lemma 1.4]{Simon93}, $\partial E_n\cap B_{\theta \rho}$ is connected. By \cite[Lemma 2.1]{Simon93}, we can find $\sigma \in (\theta \rho/2, \theta \rho)$ such that $\partial E_n\cap B_\sigma$ is a topological disk $D_1^{(n)}$ with $\partial E_n\cap \partial B_\sigma$ coinciding with the graph of a function $u_n$. Considering the surface $\widetilde{\Sigma}_n=(\partial E_n\backslash D_1^{(n)})\cup {\rm graph }\, w_n$ where $w_n$ is the biharmonic function coinciding with $u_n$  on $ \partial B_\sigma$ we see that $\widetilde{\Sigma}_n=\partial F_n$ for some $C^{1,1}$ set $F_n$ with $F_n\Delta E_n\subset B_{\sigma}$. Moreover, by \cite[Lemma 2.2]{Simon93}, we have arguing exactly as in \cite{Simon93},
\[
  \int_{\partial F_n\cap B_{\sigma}} |A|^2d\H^2\le C \int_{\partial E_n \cap (B_{\theta \rho}\backslash B_{\theta\rho/2})} |A|^2d\H^2.
\]
Combining this with \eqref{quasimin2} we find 
\[
 \int_{\partial E_n \cap B_\sigma} |A|^2d\H^2\le C \int_{\partial E_n \cap (B_{\theta \rho}\backslash B_{\theta\rho/2})} |A|^2d\H^2 + CQ_0^2 \rho^{3-\alpha}  +\delta_n.
\]
Adding $C$ times the left-hand side to this inequality and dividing by $1+C$ we get 
\[
 \int_{\partial E_n \cap B_{\theta \rho/2}} |A|^2d\H^2 \le \int_{\partial E_n \cap B_\sigma} |A|^2d\H^2 \le \gamma \int_{\partial E_n \cap B_{\theta \rho}} |A|^2 d\H^2+ CQ_0^2 \rho^{3-\alpha}  +\delta_n,
\]
where $\gamma=C/(C+1)\in (0,1)$. Setting 
\[\psi(\rho)=\int_{\partial E\cap B_\rho} |A|^2d\H^2\]
and sending $n\to \infty$  in the previous inequality, thanks to \eqref{convsmooth} we find (recall that $\rho\in (r/2,r)$ and that $\H^2(\partial E\cap \partial B_\rho)=0$ for a.e. $\rho$)
\[
 \psi(\theta r/4)\le \psi(\theta \rho/2)\le \gamma\psi(\theta \rho) + CQ_0^2 \rho^{3-\alpha}\le \gamma \psi(r) +CQ_0^2 r^{3-\alpha}.
\]
Applying Lemma \ref{lem:induction} with $\theta'=\theta/4$, we get the existence of $\beta\in (0,3-\alpha)$ such that
\[
 \psi(r)\les \lt(\frac{r}{r_0}\rt)^\beta (\psi(r_0)+ r_0^\beta)\les \lt(\frac{r}{r_0}\rt)^\beta.
\]
Arguing as for \cite[(3.5)]{Simon93}, we find that up to a rotation,
\[
 \partial E\cap B_{r_0/2}=\{(x',u(x')), x'\in B'_{r_0/2}\}
\]
and for every $r\le r_0/2$, there exists $\tau=\tau(r)$ such that 
\[
 \frac{1}{r^2}\int_{B'_r} |Du-\tau|^2 \les \lt(\frac{r}{r_0}\rt)^{\beta/2}.
\]
Repeating the argument for every $x\in \partial E\cap B_{r_0/2}$ we find by Campanato criterion  that $Du\in C^{0,\beta/4}(B'_{r_0/2})$ with 
\[
 r_0^{\frac{\beta}{4}} \|u\|_{C^{1,\beta/4}(B_{r_0/2})}\les 1.
\]
Up to renaming $\beta$ this concludes the proof of \eqref{C1betaestim}.

\end{proof}

	\begin{corollary}\label{cor:ns}
		There exists $Q_0>0$ such that for $Q<Q_0$ and $\eta\in[0,1]$ any approximable minimizer of $\wF_Q^{\eta,\Lambda}$ is a $C^{1,\beta}$ regular nearly spherical set with uniformly bounded (with respect to $Q$ and $\eta$) $C^{1,\beta}$ norm.
	\end{corollary}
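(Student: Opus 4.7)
The strategy is to combine Proposition \ref{prop:exist} (which gives the existence of a conformal $W^{2,2}\cap {\rm Lip}$ parametrization that is quantitatively close to the identity) with the $\varepsilon$-regularity Theorem \ref{thm:regularity}. In particular, since Proposition \ref{prop:exist} yields a parametrization $\partial E=\psi(\bSr)$ satisfying
\[
 |r-1|+\|\psi-{\rm Id}\|_{W^{2,2}(\bSr)}^2 +\|h-1\|_{L^\infty(\bSr)}\lesssim Q^2,
\]
the representation formula \eqref{eq:willmoreinpsi} together with the bound $h\in(1/2,2)$ implies that the total curvature is globally small: $\int_{\partial E}|A|^2 d\H^2\lesssim Q^2$.

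The first step is therefore to pick $Q_0$ small enough so that the quantity $\int_{\partial E}|A|^2$ is below the universal threshold $\varepsilon^2$ from Theorem \ref{thm:regularity}. Then, for any $x\in\partial E$ and $r_0\in(0,1)$ of size comparable to $1$, the hypothesis \eqref{hypepsreg} is automatically satisfied at $x$ (with the same $r_0$), since the integral over the ball is bounded by the global one. A direct application of Theorem \ref{thm:regularity} at every point of $\partial E$ then provides a uniform (in $Q$ and $\eta$) $C^{1,\beta}$ bound on $\partial E$, namely, up to a rotation depending on the base point, a local graph representation of $\partial E$ over the tangent plane with $C^{1,\beta}$ norm bounded by a universal constant.

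To upgrade these local graph representations to a global nearly spherical parametrization, I would use that by Sobolev embedding $W^{2,2}(\bSr)\hookrightarrow C^{0,\alpha}(\bSr)$ for any $\alpha<1$, so $\psi$ is $C^{0,\alpha}$-close to ${\rm Id}$, i.e. $\partial E$ is Hausdorff-close to $\partial B_1$ with closeness controlled by $Q$. Combining this Hausdorff closeness with the uniform $C^{1,\beta}$ estimate (in particular the smallness of the oscillation of the outward unit normal), the projection $\pi_{\S^2}\colon \partial E\to\S^2$, $x\mapsto x/|x|$, is a $C^{1,\beta}$-diffeomorphism, and $\partial E$ can be written globally as
\[
 \partial E=\{(1+u(\omega))\omega\,:\,\omega\in\S^2\}
\]
with $\|u\|_{C^{1,\beta}(\S^2)}$ uniformly small in $Q$ and $\eta$. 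This is the standard definition of a nearly spherical set.

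The main technical issue is ensuring that the $C^{1,\beta}$ constant obtained pointwise from Theorem \ref{thm:regularity} is truly uniform with respect to $\eta\in[0,1]$ and $Q\le Q_0$; however, this is built into the regularity theorem itself since $\beta$ and $\varepsilon$ are declared not to depend on $\eta$, and the implicit constants in \eqref{C1betaestim} only depend on $Q_0$ and dimensional quantities. Once that is settled, passing from the local $C^{1,\beta}$ graphs to the global nearly spherical form via $\pi_{\S^2}$ is routine, and the smallness of $\|u\|_{C^{1,\beta}}$ then follows from the smallness of $\|\psi-{\rm Id}\|_{C^{0,\alpha}}$ combined with the uniform $C^{1,\beta}$ bound, for instance by interpolating between Hausdorff closeness and the graph estimates.
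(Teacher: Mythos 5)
Your first step contains a genuine error: the claim that $\int_{\partial E}|A|^2\,d\H^2\lesssim Q^2$ is false. The Li--Yau inequality (see~\eqref{LiYauA}) gives $\int_{\partial E}|A|^2\,d\H^2\ge 8\pi$ for \emph{every} compact surface, and indeed on $\S^2$ one computes $|A|^2\equiv 2$, hence $\int_{\S^2}|A|^2\,d\H^2=8\pi$. The quantity that is small of order $Q^2$ is the traceless part $\int_{\partial E}|A^\circ|^2\,d\H^2$ (used in the proof of Proposition~\ref{prop:exist}), or equivalently the $W^{2,2}$-distance of $\psi$ to the identity. Since $\|\psi-{\rm Id}\|_{W^{2,2}}^2\lesssim Q^2$ says that the curvature of $\partial E$ is $L^2$-close to that of $\S^2$, not that it is small, there is no $Q_0$ for which the global integral drops below the universal $\varepsilon^2$ from Theorem~\ref{thm:regularity}. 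Consequently "the integral over the ball is bounded by the global one" does not give~\eqref{hypepsreg} for any choice of $Q_0$.

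What is actually true, and what the paper exploits, is that the curvature density of the \emph{limiting} sphere is uniformly spread out: for every $\varepsilon>0$ one can pick $r_0>0$ (independent of $Q$ and $\eta$) so that $\int_{\S^2\cap B_{r_0}(x)}|A|^2\,d\H^2\le\varepsilon^2/2$ for all $x\in\S^2$. The content of the corollary is then to transfer this local smallness to $\partial E$; the paper does this by a compactness/contradiction argument: if the conclusion failed along a sequence $Q\to 0$, one would find points $\bar x$ where $\int_{\partial E_Q\cap B_{r_0}(\bar x)}|A|^2\ge\varepsilon^2$, but the $W^{2,2}$-convergence $\psi_Q\to{\rm Id}$ from Proposition~\ref{prop:exist} forces $\int_{\partial E_Q\cap B_{r_0}(\bar x)}|A|^2\to\int_{\S^2\cap B_{r_0}(\bar x)}|A|^2\le\varepsilon^2/2$, a contradiction. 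Your closing step (upgrading local $C^{1,\beta}$ graphs plus Hausdorff closeness into a global nearly-spherical parametrization via $\pi_{\S^2}$) is fine and matches the paper; you need to replace the global smallness claim with a local one obtained through the $W^{2,2}$-convergence to the round sphere, plus an argument that the smallness holds uniformly over base points and over the parameters $Q,\eta$ -- which is exactly what the contradiction argument delivers.
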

	\begin{proof}
		Let us notice that for any $\varepsilon>0$ one can  find $r_0>0$ such that for every $x\in \S^2$,
		\begin{equation}\label{smallAball}
		\int_{ \S^2\cap B_{r_0}(x)} |A|^2d\H^2\le \varepsilon^2/2.
		\end{equation}
		We claim that if $Q_0$ is small enough then for every $Q\le Q_0$ and every approximable $E$ of $\wF_Q$, up to translation 
		\[
		 \partial E=\{(1+\phi(x))x \ : x\in \S^2\}
		\]
for some function $\phi$ with $\|\phi\|_{C^{1,\beta}(\S^2)}\les 1$, where $\beta>0$ is given by Theorem \ref{thm:regularity}. Indeed, assume this is not the case then by a covering argument and Theorem \ref{thm:regularity} there would exist $\bar x\in \S^2$ and  a sequence $E_Q$ of approximable minimizers of $\wF_Q^{\eta_Q}$ with $Q\to 0$ and such that $\bar x\in \partial E_Q$ with
\begin{equation}\label{hypabsurdC1beta}
 \int_{ \partial E_Q \cap B_{r_0}(\bar x)} |A|^2 d\H^2\ge \eps^2.
\end{equation}

However, by Proposition \ref{prop:exist}, for every $Q$ there exist  $r_Q, \psi_Q$ such that  $\psi_Q$ is conformal, $\partial E_Q=\psi_Q(\partial B_{r_Q})$ and 
\[
 |r_Q-1|+\|\psi_Q-{\rm Id}\|^2_{W^{2,2}(\partial B_{r_Q})}+\|h_Q-1\|_{L^\infty(\partial B_{r_Q})}\les Q^2.
\]
Thus $\psi_Q$ converges $W^{2,2}$ to $ {\rm Id}$ and $\int_{\partial E_Q} |A|^2 d\H^2\to \int_{\S^2} |A|^2 d\H^2$ which then implies that also
\[
 \int_{ \partial E_Q \cap B_{r_0}(\bar x)} |A|^2 d\H^2\to \int_{ \S^2 \cap B_{r_0}(\bar x)} |A|^2 d\H^2.
\]
In light of \eqref{smallAball} and \eqref{hypabsurdC1beta} this gives a contradiction if $Q$ is small enough.
	\end{proof}
	                                                                                                                                                                                                                                                                                                                                                                                                                                                                    
	\section{Proof of the main result}\label{sec:mainthm}
	We now prove the main result of the paper.
	
		\begin{proof}[Proof of Theorem \ref{thm:main}]
	Let $Q_0>0$ be the constant given in Proposition \ref{prop:exist}. Let $E^\eta$ be an approximable minimizer for $\wF_Q^{\eta}$, for $\eta>0$. Up to decrease further $Q$ we can suppose that the hypotheses of Theorem \ref{thm:regularity} applies so that by  Corollary \ref{cor:ns}  we know that \[
	\partial E^{\eta}=\{(1+\phi_\eta(x))x \ : x\in \S^2\}
	\]
	for some $\phi_\eta$ with $\|\phi_\eta\|_{C^{1,\beta}(\S^2)}$ uniformly bounded in $\eta$ and $Q$.

	By Arzela-Ascoli we can find, for any $\widetilde\beta<\beta$,  a sequence $\eta_n>0$ with $\eta_n\to0$ such that $\phi_{\eta_n}$ converges to $\phi$ in $C^{1,\widetilde \beta}(\S^2)$. As a consequence, setting $E_n=E^{\eta_n}$ and
	\[
	\partial E=\{(1+\phi(x))x \ : x\in \S^2\}
	\]
	we have that $E_n\to E$ in Hausdorff and in $L^1$. Notice that since $|E^\eta|=|B_1|$ we also have $|E|=|B_1|$.

		Let us prove that $E$ is an approximable minimizer of $\wF_Q$. Since $E_n$ are approximable minimizers, there exists $r_n$ and $\psi_n\in W^{2,2}(\S^2_{r_n})\cap {\rm Lip}(\S^2_{r_n})$ with $\partial  E_n=\psi_n(\S^2_{r_n})$,  such that  \eqref{quantpsi}  holds.
		Let $\psi$ be the limit of $\psi_n$ and $r=\lim_{n} r_n$. By Proposition \ref{lem:approx}, Lemma \ref{lem:contintuityIeta} and the fact that $E$ is $C^{1,\beta}$, there exists a sequence of smooth sets $\widetilde{E}_n$ such that \eqref{convsmooth} holds with $\eta=0$. In particular we have
		\[
		 \widetilde{\F}_Q(E)\ge \inf\lt\{ \widetilde{\F}^\Lambda_Q(F) \ : \ F \textrm{ smooth}\rt\}.
		\]
		Let us prove the converse inequality. Since $\Iae(E_n)\ge \Ia(E_n)$ and $\Ia(E_n)\to \Ia(E)$ by Lemma \ref{lem:contintuityIeta} again and the fact that $E$ is $C^{1,\beta}$.   By lower semicontinuity of $\int_{\partial E} |A|^2 d\H^2$ we find
		\begin{equation}\label{lowerEn}
		 \widetilde{\F}_Q(E)\le \liminf_{n\to \infty} \wF_Q^{\eta_n,\Lambda}(E_n).
		\end{equation}
 If  now $F$ is a smooth compact set then using the fact that $E_n$ are approximable minimizers and \eqref{lowerEn} we have
\[
  \widetilde{\F}_Q(E)\le \liminf_{n\to \infty} \wF_Q^{\eta_n}(E_n)\le\liminf_{n\to \infty} \wF_Q^{\eta_n,\Lambda}(F).
\]
Using \cite[Proposition 2.16]{gnrI} and a diagonal argument we see that for smooth sets $F$ we have $\lim_{n\to \infty} \wF_Q^{\eta_n,\Lambda}(F)= \wF_Q^{\Lambda}(F)$. We conclude that
\[
 \widetilde{\F}_Q(E)= \inf\lt\{ \widetilde{\F}^\Lambda_Q(F) \ : \ F \textrm{ smooth}\rt\}.
\]
By Lemma \ref{lem:A} and Proposition \ref{prop:Lambda} we conclude that we also have
\[
  {\F}_Q(E)= \inf\lt\{ \F_Q(F) \ : \ |F|=|B_1|,  \ F \textrm{ smooth}\rt\}.
\]
This proves the existence of approximable (volume-constrained) minimizers of $ {\F}_Q$. Moreover, Corollary \ref{cor:ns} implies that all such approximable minimizers are $C^{1,\beta}$ regular nearly-spherical sets with $C^{1,\beta}$ norm uniformly converging to $0$ as $Q\to 0$. Let now $E$ be any such approximable minimizer.
Testing the minimality against $F=B_1$ we have after rearranging the terms
\begin{equation}\label{delWdelI}
 W(E)-W(B_1)\le Q^2(\Ia(B_1)-\Ia(E)).
\end{equation}
Let us bound the left-hand side from below. Since $E$ is an approximable minimizer there exists a sequence  of smooth sets $E_n$ with $W(E_n)\to W(E)$ and $P(E_n)\to P(E)$. Let $r_n\to 1$ be such that $|B_{r_n}|=|E_n|$. By \cite{roesc12},
	\[
	P(E_n)-P(B_{r_n})\les (W(E_n)-W(B_{r_n})),
	\]
	so that sending $n\to \infty$ we find
	\begin{equation}\label{delW}
	 P(E)-P(B_1)\les (W(E)-W(B_1)).
	\end{equation}
Using that $E$ is a nearly spherical set, we have by \cite[Proposition 5.5]{gnrI} in the case $\alpha=2$ (see also \cite[Lemma 2.2]{Prunier} for a stronger statement) and by \cite[Proposition 4.5.]{gnr4} for  $\alpha\in (0,2)$ that the right-hand side of \eqref{delWdelI} may be bounded from above by
\begin{equation}\label{delI}
 \Ia(B_1)-\Ia(E)\les ( P(E)-P(B_1)).
\end{equation}
Plugging  \eqref{delW} and \eqref{delI} into \eqref{delWdelI} yields
\[
  (P(E)-P(B_1))\les Q^2  (P(E)-P(B_1))
\]
which for $Q$ small enough implies $P(E)=P(B_1)$ and thus up to translation, $E=B_1$.
	\end{proof} 
%
%

	
	\bibliographystyle{acm}

	\bibliography{bibliognrW}	
	
\end{document}